\documentclass[11pt]{article}

\usepackage[T1]{fontenc}
\usepackage[utf8]{inputenc}
\usepackage[a4paper,margin=2.5cm]{geometry}

\usepackage{amsmath, amsthm, amssymb, mathtools}
\usepackage{enumerate}
\usepackage{graphicx}
\usepackage{xcolor}
\usepackage{tikz}
\usetikzlibrary{calc,trees,positioning,arrows,chains,shapes.geometric,%
	decorations.pathreplacing,decorations.pathmorphing,shapes,%
	matrix,shapes.symbols,plotmarks,decorations.markings,shadows}

\definecolor{lila}{RGB}{150,51,255}
\definecolor{verdesi}{RGB}{0,153,0}
\definecolor{linkblue}{RGB}{0,20,128}
\definecolor{citegreen}{RGB}{46,126,42}
\definecolor{urlmagenta}{RGB}{138,0,135}

\usepackage{hyperref}
\hypersetup{
	colorlinks = true,
	linkcolor  = linkblue,
	citecolor  = citegreen,
	urlcolor   = urlmagenta,
}

\theoremstyle{plain}
\newtheorem{theorem}{Theorem}[section]
\newtheorem{corollary}[theorem]{Corollary}
\newtheorem{lemma}[theorem]{Lemma}
\newtheorem{proposition}[theorem]{Proposition}

\theoremstyle{definition}

\theoremstyle{remark}
\newtheorem{remark}[theorem]{Remark}

\newcommand{\N}{\mathbb{N}}

\newcommand{\E}{\mathbb{E}}
\renewcommand{\P}{\mathbb{P}}
\newcommand{\VEC}[1]{\mathbf{#1}}
\newcommand{\ind}{\mathbf{1}}

\makeatletter
\newcommand{\leqnomode}{\tagsleft@true\let\veqno\@@leqno}
\makeatother

\begin{document}

\title{The $Q$-process for the general mother-dependent \\ multitype branching process}
\author{
	Airam Blancas\thanks{Department of Statistics, ITAM, Mexico.
		Email: \texttt{airam.blancas@itam.mx}}
	\and
	Mar\'ia Clara Fittipaldi\thanks{Facultad de Ciencias, UNAM, Mexico.
		Email: \texttt{mcfittipaldi@ciencias.unam.mx}}
	\and
	Sara\'i Hern\'andez-Torres\thanks{Instituto de Matem\'aticas, UNAM, Mexico.
		Email: \texttt{saraiht@im.unam.mx}}
}
\date{}
\maketitle

\begin{abstract}
	The general mother-dependent neutral mutations model is a multitype branching
	process describing a population evolving in discrete time, in which mutations
	occur among a finite set of alleles: each mutant, independently of the others,
	acquires an allele different from its mother's, according to a prescribed
	distribution.
	We study the associated mutant and clone-mutant chains conditioned on extinction
	in the distant future, that is, their $Q$-processes, and we give an explicit
	description of their laws in the (sub)critical and in the supercritical regimes
	alike.
	Even under this conditioning, the mother-dependent neutral mutations model is
	obtained from the corresponding infinite-alleles BGW process (studied in
	Bertoin (2010) and Blancas and Rivero (2018)) by an independent multinomial
	colouring with specific parameters.
	We also obtain the spine decomposition of the multitype mutant chain, from which
	we derive a Kesten--Stigum theorem in the supercritical regime and a genealogical
	construction of the $Q$-process in the (sub)critical one.
\end{abstract}

\noindent
\textit{Keywords.} Multitype branching processes, neutral mutations, $Q$-process,
spine decomposition, size-biased trees.

\noindent
\textit{2020 Mathematics Subject Classification.} Primary 60J80;
Secondary 60J10, 60J85, 92D25.

\section{Introduction}

Random fluctuations may allow a population to persist for long periods, even when extinction is inevitable, on time scales exceeding those accessible to observation.
Such behaviour arises, for example, in epidemic models; see~\cite{PenissonJacob12}.
In this work, we study the asymptotic behaviour of a population with neutral mutations and finitely many alleles, where the mutant population is conditioned on
long survival prior to extinction.

A classical model for population size is the Bienaym\'e--Galton--Watson (BGW) process, in which individuals reproduce independently with a common offspring distribution. 
For the BGW process $Z \coloneqq (Z_k,\, k \in \N_0)$, the extinction time is $\tau_Z \coloneqq \inf \{ k \in \N : Z_k = 0 \}$.
In general, the associated \emph{$Q$-process} $(Z_k^{\uparrow},\, k \in \N_0)$ is obtained by conditioning $Z$ to survive far into the future but eventually become extinct. 
Formally, for an initial population of $x \in \N_0$ individuals, its law is given by
\begin{equation} \label{eq:Qprocess}
	\P_x \big( Z_k^{\uparrow} = y \big)
	\coloneqq
	\lim_{n \rightarrow \infty } \P_x \big( Z_k = y \, \vert \, n + k < \tau_Z < \infty \big), \qquad k,y \in \N_0, \quad y \in \N_0. 
\end{equation}
Throughout this paper, we adopt this construction of the $Q$-process following~\cite[Chapter I.D]{AthreyaNey}.

Introducing independent mutations along ancestral lines enriches the BGW model by allowing offspring to differ in genotype while preserving the underlying
population-size dynamics. 
Under Kimura's infinite-alleles assumption~\cite{Kimura1971}, each mutation produces
a novel allele, leading to the \emph{BGW process with neutral mutations}~\cite{Bertoin09,Bertoin10}. In order to describe the sizes of allelic subfamilies together with their mutant descendants, 
Bertoin~\cite{Bertoin10} defined the \emph{infinite-alleles clone-mutant chain} $((T_k,M_{k+1}),\, k\in\N_0)$, which records the total population size, including clones and mutants, $(T_k,\, k \in \N_0)$, together with the number of mutants $(M_{k+1},\, k \in \N_0)$ across allelic generations. 
An individual belongs to the $k$-th allelic generation if there are exactly $k$ mutations along its genealogy. 
We refer to the latter process as the \emph{infinite-alleles mutant chain}, which is a BGW process as well.

The $Q$-process associated with the infinite-alleles clone-mutant chain was analyzed  in~\cite{BlancasRivero}. 
It is obtained as the weak limit under conditioning $((T_k,M_{k+1}),\, k\in\N_0)$ on survival up to a large but finite time $\tau \coloneqq \inf\{ k\in \N : M_{k} = 0 \}$, in analogy with~\eqref{eq:Qprocess}.

In this work, we consider a finite-alleles analogue of the setting above: each neutral mutation independently selects an allele in $[d]\coloneqq\{1,\dots,d\}$
according to a prescribed distribution, subject to the restriction that it differs from the type of its mother.
The resulting population is modelled by the \emph{general mother-dependent neutral
mutations model}, introduced in Section~\ref{sec:modelDef} as a $d$-type BGW process.
From now on, we will simply refer to it as the \emph{general mother-dependent model},
which is an extension of the model introduced in~\cite{BFH} and further studied in~\cite{BFH1}.

Extending the framework developed for Bertoin's BGW processes with neutral mutations, we study the associated $d$-type clone-mutant chain defined by a general mother-dependent model
\begin{equation} \label{eq:cloneMutantChainVector}
	\big( (\VEC{T}_k,\VEC{M}_{k+1}),\, k \in \N_0 \big),
\end{equation}
which takes values in $\N_0^d \times \N_0^d$ and records the type composition of both clones and mutants.

Our main result is Theorem~\ref{thm:condnonext_clonemut}, which establishes the existence of the associated $Q$-process
\[
	\big( (\VEC{T}^{\uparrow}_k,\VEC{M}^{\uparrow}_{k+1}),\, k \in \N_0 \big)
\]
for the $d$-type clone-mutant Markov chain and gives an explicit description of its law in terms of the original clone-mutant chain. We obtain a single formula that covers the (sub)critical and the supercritical regimes simultaneously.
The result holds under certain assumptions on the regime of the embedded BGW processes governing clones and mutants, which we specify below.

\subsection{Regimes and hypotheses} \label{sub:regimes_and_hypothesis}

In the general mother-dependent model, individuals in the population are either clones or mutants. Throughout this work, we impose assumption~\eqref{hyp:size}, under
which each clone lineage is subcritical. In particular, this excludes, almost surely, the possibility of an infinite lineage consisting entirely of clones.
The precise formulation of this assumption is given in
Subsection~\ref{sub:the_general_mother_dependent_model}.

The mutant population is encoded by the $d$-type mutant chain $\VEC{M} \coloneqq (\VEC{M}_k,\, k \in \N_0)$, which is a multitype BGW process by Proposition~\ref{prop:GTyMMarkov}.
We allow the mutant process to be subcritical, critical, or supercritical, provided
that extinction occurs with positive probability.
Also, the definition of the general mother-dependent model ensures that $\VEC{M}$ is positive regular, so its only absorbing state is $\VEC{0}$.

Let $\tau_\VEC{M}$ denote the extinction time of the mutant population, defined by
\begin{equation}\label{eq: extime}
	\tau_\VEC{M} \coloneqq \inf \{ k \in \N_0 : |\VEC{M}_k| = 0 \}.
\end{equation}
Throughout this work, we impose the following standing hypothesis:
\begin{equation} \label{hyp:timext}
	\P_{i}(\tau_\VEC{M}<\infty)>0, \qquad \text{for every } i\in [d]. \tag{H}
\end{equation}
Here and below, the subscript in the measure indicates the initial population; in this case, $\VEC{M}_0 = \VEC{e}_i$, meaning that the process starts with one individual of type $i$. 
By Proposition~\ref{prop: q} and Remark~\ref{re:Hcharacterisation} below,
\eqref{hyp:timext} holds if and only if $\mu^+(0)>0$.
Under hypothesis~\eqref{hyp:timext}, we further distinguish between different regimes.

In the subcritical and critical regimes, extinction occurs almost surely, that is, $\tau_\VEC{M} < \infty$ almost surely.
We treat these regimes jointly and refer to them as the \emph{(sub)critical case}, corresponding to the hypothesis
\begin{equation} \label{hyp:timextsubc} \tag{H1}
	\P_{i}(\tau_\VEC{M}<\infty)=1, \qquad \text{for every } i\in [d].
\end{equation}
By contrast, the supercritical regime for $\VEC{M}$ corresponds to the hypothesis
\begin{equation} \label{hyp:timextsupc} \tag{H2}
	\P_{i}(\tau_\VEC{M}<\infty) \in (0,1), \qquad \text{for every } i\in [d].
\end{equation}

While assumption~\eqref{hyp:size} is imposed throughout this work, our results are stated under hypothesis~\eqref{hyp:timext}; the distinction
between~\eqref{hyp:timextsubc} and~\eqref{hyp:timextsupc} enters only through the
value of the extinction probability $q$ introduced in Proposition~\ref{prop: q}, and we point out explicitly the (few) places where the two regimes must be treated
separately.

\subsection{Main results}

A key observation, leading to our main result Theorem~\ref{thm:condnonext_clonemut}, is that the transition probabilities of the clone-mutant chain depend only on those of the mutant chain. 
This property was established in~\cite[Lemma~1]{Bertoin10} for the infinite-alleles case and in~\cite[Lemma~2.2]{BFH1} for the (uniform) finite-alleles case.

Consequently, our analysis begins with the study of the $d$-type mutant chain $(\VEC{M}_k,\, k \in \N_0)$ conditioned on non-extinction, following an analogous
construction to~\eqref{eq:Qprocess}.
Since this chain is itself a $d$-type BGW process, Theorem~\ref{thm: Qprocess} defines its associated $Q$-process $(\VEC{M}^\uparrow_k,\, k \in \N_0)$ under the sole hypothesis~\eqref{hyp:timext}. Furthermore, Theorem~\ref{thm: Qprocess} expresses its transition probabilities explicitly in terms of those of the original mutant chain, of the extinction probability $q$, and of the derivative at $q$ of the offspring generating function of the \emph{mutant-count
process} $(|\VEC{M}_k|,\, k \in \N_0)$, defined as the total number of mutants at each allelic generation.
Lemma~\ref{lem: eqMand|VECM|} links the finite-alleles setting with the infinite-alleles model by showing that, for the general mother-dependent model, the
process $(|\VEC{M}_k|,\, k \in \N_0)$ is equal in law to the infinite-alleles mutant chain $(M_k,\, k \in \N_0)$.
Corollary~\ref{cor: Conmuta} extends this correspondence to the associated $Q$-processes $(|\VEC{M}^\uparrow_k|,\, k \in \N_0)$ and $(M^\uparrow_k,\, k \in \N_0)$, again showing equality in law, in both regimes.

Moreover, Proposition~\ref{prop:colorChain} identifies the transitions of the $Q$-process $(\VEC{M}^\uparrow_k,\, k \in \N_0)$.
Specifically, the distribution of $\VEC{M}^\uparrow_1$ is obtained by first generating the mutant count $\vert \VEC{M}^\uparrow_{1} \vert \overset{\mathcal{L}}{=} M_1^\uparrow$ related with $(M_k,\, k \in \N_0)$, and then independently assigning alleles according to the prescribed distribution on the set of
alleles $[d]$.
We refer to this assignment procedure as a \emph{colouring}, which follows a multinomial distribution.
These relationships are summarised in Figure~\ref{fig:conmDiagram}.

\begin{figure}[ht!]
	\centering
	\begin{tikzpicture}[-latex, auto, node distance=5.5cm, on grid, semithick]
		\node[draw] (A){$ (\VEC{M}_k,k\in \N_0)$};
		\node[draw] (B) [right=10 cm of A] {$ (|\VEC{M}_k|,k\in \N_0)\overset{\mathcal{L}}{=} (M_k,k\in \N_0)$};
		\node[draw] (C) [below=2 cm of A] {$(\VEC{M}^{\uparrow}_k,k\in \N_0)$};
		\node[draw] (D) [below=2 cm of B] {$ (M^\uparrow_k,k\in \N_0)\overset{\mathcal{L}}{=}(|\VEC{M}^\uparrow_k|,k\in \N_0)$};
		\path (A) edge [-stealth] node[above=0.15 cm] {{\it Mutant-count process}} (B);
		\path (B) edge [-stealth] node[right =0.15 cm] {{\it $Q$-process}} (D);
		\path (D) edge [-stealth] node[below =0.15 cm] {{\it Multinomial colouring over} $[d]$} (C);
		\path (A) edge [-stealth] node[left =0.15 cm] {{\it $Q$-process}} (C);
	\end{tikzpicture}
	\caption{Lemma~\ref{lem: eqMand|VECM|} shows that the general mother-dependent neutral mutations model is obtained by independently colouring the
	infinite-alleles mutant chain. 
    Corollary~\ref{cor: Conmuta} and Proposition~\ref{prop:colorChain} extend this correspondence to the associated $Q$-processes, in both the (sub)critical and the supercritical regimes, yielding a commutative diagram.}
	\label{fig:conmDiagram}
\end{figure}

Taken together, these results for the $d$-type mutant chain provide the key identities needed to prove our main result, Theorem~\ref{thm:condnonext_clonemut}, namely the existence of the associated $Q$-process
$( (\VEC{T}^{\uparrow}_k, \VEC{M}^{\uparrow}_{k+1}),\, k \in \N_0 )$ for the clone-mutant Markov chain.

Additionally, we study the spine decomposition of the $d$-type mutant chain, which sheds light on the chain in both regimes. 
In this construction, the spine corresponds to a distinguished lineage in the genealogical tree that survives throughout all generations.

In the supercritical case, the spine decomposition gives access to the asymptotic behaviour of the chain: Proposition~\ref{thm: KS-multitype} shows that, after normalization, the $d$-type mutant chain converges almost surely to the product of a non-negative random
variable $W$ and the deterministic vector $\VEC{u}$, where $W$ arises from the mutant-count process and $\VEC{u}$ is a deterministic probability vector giving the asymptotic proportions of types in the population. This provides another setting in which the diagram of Figure~\ref{fig:conmDiagram} applies, with the spine of the $d$-type mutant chain seen as an independent colouring of the spine of the infinite-alleles mutant chain.

On the other hand, in the (sub)critical case, the spine decomposition yields a construction of the $Q$-process, with Proposition~\ref{thm: spine subcritical}
describing the associated $Q$-process in terms of the size-biased distribution. 
We therefore obtain a construction of the genealogical tree associated with the $Q$-process of a (sub)critical $d$-type mutant chain.

\subsection{Notation} \label{sub:notation}

We write $\N=\{1,2,\dots\}$ and $\N_0=\N\cup\{0\}$, and $[d]$ for the set of alleles. Vectors in $S~\coloneqq\N_0^d$ are written in boldface; $\VEC{e}_i$ denotes the $i$-th canonical vector, $\VEC{0}$ and $\VEC{1}$ the vectors with all entries equal to $0$ and $1$, and $\VEC{v}(j)$ the $j$-th coordinate of $\VEC{v}$. 
We set
\[
	|\VEC{v}| \coloneqq \sum_{j=1}^d \VEC{v}(j),
	\qquad
	\overline{\VEC{v}}^{\,i} \coloneqq \VEC{v}-\VEC{v}(i)\VEC{e}_i,
	\qquad
	\VEC{s}^{\VEC{v}} \coloneqq \prod_{j=1}^{d} \VEC{s}(j)^{\VEC{v}(j)}
	\quad (\VEC{s}\in[0,1]^d),
\]
with the convention $0^0=1$, and we write $\VEC{u}\boldsymbol{\le}\VEC{v}$ for the
product (coordinatewise) order. 
Two subsets of $S$ occur repeatedly:
\[
	S^{\ast} \coloneqq \N_0^d\setminus\{\VEC{0}\},
	\qquad
	\overline{S}^{\,i} \coloneqq \{ \VEC{v}\in\N_0^d : \VEC{v}(i)=0 \},
	\qquad
	\overline{S}^{\,i,\ast} \coloneqq \overline{S}^{\,i} \cap S^{\ast}.
\]
$S^{\ast}$ will be the relevant state space for the mutant chain in the event that extinction does not occur.

For a multivariate probability generating function $\VEC{f}=(f^{(1)},\dots,f^{(d)})$ we denote by $\VEC{f}_n$ its $n$-fold iterate, $\VEC{f}_0(\VEC{s})=\VEC{s}$ and $\VEC{f}_{n+1}=\VEC{f}\circ \VEC{f}_n$, and we abbreviate $\VEC{f}_n^{\VEC{v}}(\VEC{s}) \coloneqq \big( \VEC{f}_n(\VEC{s}) \big)^{\VEC{v}}$.
Finally, $\P_{\VEC{a}}$ denotes the law of the model started from $\VEC{a}\in S$, and we abbreviate $\P_{i}\coloneqq \P_{\VEC{e}_i}$.

\subsection{Organization}

Section~\ref{sec:modelDef} introduces the BGW process with neutral mutations and the general mother-dependent model.
Section~\ref{sec:markov_chains_of_clones_and_mutants} analyzes the $d$-type clone-mutant, the $d$-type mutant, and the mutant-count Markov chains, primarily
through their probability generating functions.
After these preliminaries, Section~\ref{sec: backmulticond} presents the classical results on conditioned $d$-type BGW processes and derives the law of the $Q$-process of the mutant chain.
Section~\ref{sec:clone_mutant_nonExtinction} establishes the existence of the $Q$-process of the clone-mutant chain.
Section~\ref{sec: spine} develops the spine decomposition of the $d$-type mutant chain.
We use it to derive a multitype Kesten--Stigum theorem (see~\cite{KLPP}) for our model in the supercritical regime and to provide a probabilistic interpretation,
complementary to that of Section~\ref{sec: backmulticond}, of the $Q$-process
$(\VEC{M}_k^{\uparrow},\, k \in \N_0)$ in the (sub)critical regime. Finally, Appendix~\ref{app: proofpgf T_0Mvec} contains the proof of Proposition \ref{prop: genlawT_0M_1}, and  Appendix~\ref{app:proof4} contains the proof of Theorem~\ref{thm: Qprocess}.

\section{Models for neutral mutations along genealogies} \label{sec:modelDef}

We introduce two models: the BGW process with neutral mutations, defined in~\cite{Bertoin10}, and the general mother-dependent model.
Both share the same genealogical structure, namely a BGW process with superimposed mutations, and differ only in the mechanism assigning alleles to mutant individuals.

We begin by describing the evolution of the population size, which is common to both models.
Let $Z^{(+)} \coloneqq (Z^{(+)}_k,\, k \in \N_0)$ be a BGW process with offspring distribution $\mu^+\coloneqq(\mu^+(k),\, k \in \N_0)$.
This process describes the evolution of the total population size across generations.
Let $\zeta^{(+)}$ be a non-negative integer-valued random variable with law $\mu^+$, representing the total number of offspring of an individual.
We decompose $\zeta^{(+)}$ as \begin{equation}\label{eq: off+Ber}
	\zeta^{(+)} \coloneqq \zeta^{(c)} + \zeta^{(m)},
\end{equation}
where $\zeta^{(c)}$ and $\zeta^{(m)}$ denote, respectively, the numbers of clone and mutant offspring.

We exclude the degenerate cases $\zeta^{(c)} \equiv 0$ and $\zeta^{(m)} \equiv 0$, and assume $\mu^+(0) + \mu^+(1) < 1$ to rule out trivial situations.
Each child is a clone of its mother with probability $1 - r$ and a mutant with probability $r$, independently of all other individuals and of the process $Z^{(+)}$.

\subsection{The BGW process with neutral mutations}

We first consider the BGW process with neutral mutations, with mutation probability $r \in (0,1)$.
In this model, each mutation produces a novel allele, so it suffices to specify, for each individual, the numbers of clone and mutant offspring.
A key element of the model is therefore the pair $(\zeta^{(c)}, \zeta^{(m)})$, whose
joint distribution $\{ \pi_{k,l} : k, l \in \N_0 \}$ is given by
\begin{equation} \label{eq:lawreptypnot}
	\pi_{k,l} \coloneqq \P( \zeta^{(c)} = k,\, \zeta^{(m)} = l )= \mu^+(k+l) \binom{k+l}{k} (1 - r)^k r^l, \qquad k, l \in \N_0,
\end{equation}
where the last equality is a consequence of the independence of mutation events.

\subsection{The general mother-dependent model} \label{sub:the_general_mother_dependent_model}

We now define the general mother-dependent model.
Recall that $r \in (0,1)$ denotes the mutation probability and let $[d]$ be the set of possible alleles.
Mutations are assumed to change type: a mutant offspring always has a type different from that of its mother.

Once a mutation occurs, the type is determined by a \emph{mutation matrix} $\VEC{P}$, where $\VEC{P}_{ij}$ is the probability that a mutant child of a type $i$ individual has type $j$.
We assume that the mutation matrix $\VEC{P}$ satisfies $\VEC{P}_{ii} = 0$ for all $i \in [d]$. 
This condition enforces that mutations change type.
We assume that $\VEC{P}$ is aperiodic and irreducible, so that every type can eventually produce descendants of every other type. Under these assumptions, we refer
to $\VEC{P}$ as a \emph{mother-dependent mutation matrix}.
Since $\VEC{P}$ is an irreducible stochastic matrix, the Perron--Frobenius theorem gives that its maximal eigenvalue $\gamma$ equals $1$, with right eigenvector $\VEC{1}$ and left eigenvector $\VEC{u}$, the stationary distribution of $\VEC{P}$; we normalize
$\VEC{u}$ so that $\VEC{u}\cdot\VEC{1}=1$.

Mutations are neutral in the sense that all individuals, regardless of type, share the same total offspring distribution $\mu^+$.
Given a mutation probability $r \in (0,1)$ and a mother-dependent mutation matrix $\VEC{P}$, for each $i\in[d]$ let $\boldsymbol{\xi}^i(j)$ denote the number of offspring of type $j$ produced by an individual of type $i$, and let $\boldsymbol{\mu}_i$ denote its joint law.
For $\VEC{v} \in S$, this law satisfies
\begin{equation}\label{eq: mothdepproba}
	\begin{split}
		\boldsymbol{\mu}_i(\VEC{v})
		&= \P ( \boldsymbol{\xi}^{i}=\VEC{v})
		= \P \big( (\boldsymbol{\xi}^{i}(1),\dots,\boldsymbol{\xi}^{i}(d))=(\VEC{v}(1),\dots,\VEC{v}(d))\big)\\
		&= \mu^+(|\VEC{v}|) \binom{|\VEC{v}|}{\VEC{v}(1),\dots,\VEC{v}(d)} (1-r)^{\VEC{v}(i)} \prod_{j \neq i}\big(r \VEC{P}_{ij}\big)^{\VEC{v}(j)}.
	\end{split}
\end{equation}
Indeed, each child mutates independently with probability $r$, and its type is chosen
according to $\VEC{P}$.

We write $\boldsymbol{\mu}=(\boldsymbol{\mu}_i,\, i\in[d])$ for the family of offspring distributions and denote by $\P_{\VEC{a}}$ the law of the resulting $d$-type BGW process $\VEC{Y}$ started from an initial population $\VEC{a} \in S^{\ast}$.
We call this process the \emph{general mother-dependent (neutral mutations) model}, with total offspring distribution $\mu^+$, mutation rate $r$, and mother-dependent mutation matrix $\VEC{P}$. For simplicity, we omit the dependence on $\mu^+$, $r$ and
$\VEC{P}$ in this notation. 

Since mutant offspring necessarily have a type different from that of their mother, for each $i \in [d]$ the numbers of clone and mutant offspring of a type $i$ individual are
\begin{equation} \label{eq:xim}
	\xi_i^{(c)} = \boldsymbol{\xi}^{i}(i)
	\qquad \text{and} \qquad
	\xi_i^{(m)} = \sum_{ j \neq i } \boldsymbol{\xi}^{i}(j),
\end{equation}
respectively. 
By neutrality and independence of mutations, $\xi_i^{(m)}$ and $\xi_j^{(m)}$ are independent and identically distributed for all $i,j \in [d]$; let
$\mu^{(m)}$ denote their common distribution.

Let $\xi_i^{(+)}$ denote the total number of offspring of an individual of type $i \in [d]$:
\begin{equation} \label{eq:offspringdist1}
	\xi_i^{(+)}
	\coloneqq \sum_{j=1}^{d} \boldsymbol{\xi}^{i}(j)
	= \xi_i^{(c)} + \xi_i^{(m)}, \qquad i \in [d].
\end{equation}
Neutrality implies that $\xi_i^{(+)}$ has the same distribution for every type, and we write $\xi^{(+)} \overset{\mathcal{L}}{=} \xi_i^{(+)}$, with $\xi^{(+)} \sim \mu^+$.

From~\eqref{eq: mothdepproba}, it follows that for any $i \in [d]$, conditionally on $\{\xi_i^{(+)} = \ell\}$, the random variable $\boldsymbol{\xi}^{i}(i)$ is $\mathrm{Binomial}(\ell, 1 - r)$, while for each $j \neq i$, $\boldsymbol{\xi}^{i}(j)$ is $\mathrm{Binomial}(\ell, r \VEC{P}_{ij})$.
If $m \coloneqq \E (\xi_i^{(+)}) < \infty$, then taking expectations yields
\begin{equation} \label{eq: expectation}
	\E\big[ \boldsymbol{\xi}^{i}(i)\big] = (1 - r)m,
	\qquad
	\E\big[ \boldsymbol{\xi}^{i}(j)\big] = m r \VEC{P}_{ij}, \quad j \neq i.
\end{equation}

We can now state the first assumption indicated in
Subsection~\ref{sub:regimes_and_hypothesis}. Throughout this work, we assume
\begin{equation} 
	\E\!\big[\boldsymbol{\xi}^{i}(i)\big] < 1
	\quad \Longleftrightarrow \quad
	m < \frac{1}{1 - r},
	\tag{A} \label{hyp:size}
\end{equation}
where the equivalence follows from~\eqref{eq: expectation}.
In particular, assumption~\eqref{hyp:size} requires that $\mu^+$ has a finite first moment.

\begin{remark}\label{re: asspr}
	A classical assumption for multitype BGW processes is positive regularity. 
    As defined in~\cite[Section 2, Chapter V]{AthreyaNey}, given a multitype BGW process
	$\VEC{Y}$ with types $[d]$, we let $\VEC{m}^n = (m_{ij}^n)_{i,j \in [d]}$ be defined
	by $\E \big[ \VEC{Y}_n \vert \VEC{Y}_0 \big] = \VEC{Y}_0 \VEC{m}^n$.
	A process is \emph{positive regular} if there exists $n$ such that $m^n_{ij} > 0$ for all $i, j \in [d]$, in which case we say that the mean matrix
	$\VEC{m} = \VEC{m}^1$ is \emph{strictly positive}.
	The expectations in~\eqref{eq: expectation} show that the mean matrix is strictly positive if and only if $\VEC{P}$ is irreducible and aperiodic. By definition of a mother-dependent mutation matrix, this assumption always holds for the general
	mother-dependent model.

As the assumption that the mutation matrix $\VEC{P}$ is irreducible and aperiodic is what makes the classical multitype machinery available, dropping it leads to a different picture, in which the set of alleles decomposes into communicating classes and the surviving allelic composition is dictated by the
recurrent classes reachable from the initial type.
\end{remark}

\begin{remark}
	The definition of a mother-dependent mutation matrix allows for different probability distributions depending on the type.
	We may consider the case where the mutation type is drawn from a common probability distribution $p = (p_j)_{j \in [d]}$, and define $\VEC{P}$ by
	\[
		\VEC{P}_{ii} = 0, \qquad \VEC{P}_{ij} = \frac{p_j}{1 - p_i}, \qquad \text{for different } i, j \in [d].
	\]
	The mother-dependent neutral mutation model studied in~\cite{BFH, BFH1} corresponds
	to the case where $(p_j)_{j \in [d]}$ is the uniform distribution, and the model defined in~\eqref{eq: mothdepproba} generalises it.
	In the uniform case, the associated matrix satisfies $\VEC{P}_{ij} = 1/(d - 1)$ for
	all $i \neq j$, so it is doubly stochastic.
	In this setting, conditionally on $\{\xi^{(+)} = \ell\}$, $\boldsymbol{\xi}^{i}(i)$ is $\mathrm{Binomial}(\ell, 1 - r)$, and each $\boldsymbol{\xi}^{i}(j)$ for
	$j \neq i$ is $\mathrm{Binomial}(\ell, r/(d - 1))$.
\end{remark}

\section{Markov chains of clones and mutants} \label{sec:markov_chains_of_clones_and_mutants}

In this section, we study three processes derived from the general mother-dependent model: the clone-mutant, mutant, and mutant-count Markov chains.
The first tracks the sizes of both clone and mutant populations; it is based on the corresponding chain in the uniform mother-dependent neutral mutation model~\cite{BFH1}, which itself builds on~\cite{Bertoin10}.
The latter two describe the evolution of mutants.

Let $\VEC{Y} \coloneqq(\VEC{Y}_n,\, n \in \N_0)$ be a general mother-dependent model as defined in Subsection~\ref{sub:the_general_mother_dependent_model}. 
Individuals reproduce according to an offspring distribution $\mu^+$, mutate with probability
$r \in (0,1)$, and their mutation types are governed by a mother-dependent mutation matrix $\VEC{P}$.

Although $\VEC{Y}$ records only the number of individuals of each type in each generation $n$ of the original genealogy, its construction induces a random coloured genealogical tree.
We briefly recall the main features of this construction.
Each individual in $\VEC{Y}_0$ is the root of a genealogical tree.
For simplicity, assume that $\VEC{Y}_0 = \VEC{e}_{i}$, so that the population starts from a single individual of type $i$, and hence we have a single tree.

In the associated tree, vertices represent individuals and colours represent types.
The initial individual of type $i$ is the root.
Each vertex is connected by one edge to its mother (except for the root) and by further edges to its offspring (see~\cite[Section 3.4]{BFH} for a detailed construction). 
A colour change along an edge corresponds to a mutation event (see Figure~\ref{fig:mother-dependent-tree}).

An individual $u$ belongs to the \emph{$k$-th allelic generation} if its ancestral line from the root contains exactly $k$ mutations.
\emph{Mutants of the $k$-th allelic generation} are those individuals whose type differs from that of their mother; each such mutant initiates an \emph{allelic subfamily}, consisting of itself and all of its descendants of the same type. 
In the coloured genealogical tree, allelic subfamilies correspond to maximal connected monochromatic components.

\tikzset{
	fannode/.style={circle, draw=white, fill=white, line width=3pt, minimum size=1cm},
	rorangenode/.style={circle, draw=black, fill=orange, line width=2pt, minimum size=1cm},
	orangenode/.style={circle, draw=black, fill=orange!25, line width=2pt, minimum size=1cm},
	rosanode/.style={circle, draw=black, fill=magenta!40, line width=2pt, minimum size=1cm},
	rrosanode/.style={circle, draw=black, fill=magenta, line width=2pt, minimum size=1cm},
	verdenode/.style={circle, draw=black, fill=verdesi!20, line width=2pt, minimum size=1cm},
	rverdenode/.style={circle, draw=black, fill=verdesi, line width=2pt, minimum size=1cm},
	lilanode/.style={circle, draw=black, fill=lila!25, line width=2pt, minimum size=1cm},
	rlilanode/.style={circle, draw=black, fill=lila, line width=2pt, minimum size=1cm}
}
\begin{figure}[ht]
	\centering
	\resizebox{.55\textwidth}{!}{%
	\begin{tikzpicture}[
		>=latex, glow/.style={%
			preaction={draw,line cap=round,line join=round,
				opacity=0.5,line width=15pt,#1}},glow/.default=green,
		transparency group,
		level 1/.style={sibling distance=69mm, level distance=20mm},
		level 2/.style={sibling distance=20mm},
		level 3/.style={sibling distance=12mm},
		level 4/.style={sibling distance=11mm},
		line/.style={edge from parent/.style={solid, magenta, ultra thick, draw}},
		line2/.style={edge from parent/.style={solid, cyan, ultra thick, draw}}
		]
		\node[rverdenode] { }
		child{node[verdenode] {}
			child{node[verdenode] {}
				child{node[rlilanode, glow] {}
				edge from parent node {\textcolor{red}{\textbf{\large X}} }
				}
			}
			child{node[rorangenode, glow] {}
				child{node[orangenode] {} }
				child{node[orangenode] {} }
			edge from parent node {\textcolor{red}{\textbf{\large X}} }
			}
			child{node[rorangenode, glow] {}
				child{node[rverdenode, glow=pink] {}
				edge from parent node {\textcolor{red}{\textbf{\large X}} }
				}
			edge from parent node {\textcolor{red}{\textbf{\large X}} }
			}
		}
		child{node[rorangenode, glow] {}
			child{node[rlilanode,glow=pink] {}
				child {node[rorangenode] {}
				edge from parent node {\textcolor{red}{\textbf{\large X}} }
				}
				child {node[lilanode] {} }
			edge from parent node {\textcolor{red}{\textbf{\large X}} }
			}
			child{node[orangenode] {}}
			child{node[rverdenode, glow=pink] {}
				child {node[verdenode] {} }
				edge from parent node {\textcolor{red}{\textbf{\large X}} }
			}
			child{node[rlilanode,glow=pink] {}
				child{node[lilanode] {} }
				child{node[lilanode] {} }
			edge from parent node {\textcolor{red}{\textbf{\large X}} }
			}
		edge from parent node {\textcolor{red}{\textbf{\large X}} }
		};
	\end{tikzpicture}}
	\caption{A representation of the general mother-dependent neutral mutations model with $d = 3$ types (green, orange, purple). 
    Mutations are marked on the edges; mutant individuals are drawn in a bolder shade. 
    Green rings indicate first-generation mutants; pink rings indicate second-generation ones.}
	\label{fig:mother-dependent-tree}
\end{figure}

For $k \in \N_0$ and $i \in [d]$, let $\VEC{M}_k(i)$ be the number of mutants of type $i$ in the $k$-th allelic generation; each one of such individuals is the firstborn of a distinct allelic family. 
By convention, individuals at level $0$ are the mutants of the $0$-th allelic generation, so that $\VEC{M}_0 = \VEC{a}$ when the process starts
from $\VEC{a}$.

Let $\VEC{T}_k(i)$ be the total number of individuals of type $i$ in the $k$-th allelic generation, including both mutants and their clones, and set
\[
	\VEC{M}_k=( \VEC{M}_k(1), \dots , \VEC{M}_k(d)) \qquad
	\text{and} \qquad
	\VEC{T}_k=(\VEC{T}_k(1), \dots ,\VEC{T}_k(d)), \qquad k\in \N_0.
\]
Then $|\VEC{M}_k|$ is the total number of mutants in the $k$-th allelic generation, and $|\VEC{T}_k|$ is the total population size of that allelic generation.

The $d$-type \emph{clone-mutant chain} associated with the general mother-dependent model under $\P_{\VEC{a}}$ is the process $\big( (\VEC{T}_k,\VEC{M}_{k+1}),\, k \in \N_0 \big)$.
The \emph{$d$-type mutant chain} is $\big( \VEC{M}_k,\, k \in\N_0 \big)$, and the \emph{mutant-count chain} is $\big( \vert \VEC{M}_k \vert,\, k\in \N_0 \big)$.
Observe that, under $\P_{i}$, all mutants of the first allelic generation have a type different from $i$, so that $\VEC{M}_1 \in \overline{S}^{\,i}$ almost surely; likewise $\VEC{T}_0 \in \N\VEC{e}_i$.
We begin by establishing that these processes are Markov chains.

\begin{proposition}\label{prop:GTyMMarkov}
	Consider a general mother-dependent model under $\P_{\VEC{a}}$.
	\begin{enumerate}
		\item[(i)] The $d$-type mutant chain $( \VEC{M}_k,\, k \in\N_0 )$ is a $d$-type BGW process with initial state $\VEC{M}_0 = \VEC{a}$ and offspring distribution
		$\P_{i}( \VEC{M}_1\in \cdot)$, $i \in [d]$.
		\item[(ii)] The mutant-count chain $( \vert \VEC{M}_k \vert,\, k \in\N_0 )$ is a BGW process with reproduction law $\mu^{(m)}$, defined below equation~\eqref{eq:xim}.
		\item[(iii)] The process $\big( (\VEC{T}_k,\VEC{M}_{k+1}),\, k \in \N_0 \big)$ is a
		Markov chain with transition probabilities satisfying 
        \begin{equation}\label{probatranTM}
			\P_{\VEC{a}}(\VEC{T}_k =\VEC{x},\VEC{M}_{k+1}=\VEC{y} \mid \VEC{T}_{k-1}=\VEC{u},\VEC{M}_{k}=\VEC{v} )
			=
			\P_{\VEC{v}} (\VEC{T}_0=\VEC{x},\VEC{M}_1=\VEC{y}),
		\end{equation}
		for all $\VEC{u},\VEC{v}, \VEC{x}, \VEC{y}\in\N_0^d$ with
		$\VEC{v} \boldsymbol{\le} \VEC{x}$ in the product order.
	\end{enumerate}
\end{proposition}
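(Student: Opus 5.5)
The plan rests on the branching (Ulam--Harris) structure of the coloured genealogical tree, organised by allelic generation rather than by ordinary generation. The key structural fact is this: given the subtree spanned by individuals with at most $k$ mutations on their ancestral line, the subtrees rooted at the mutants of allelic generation $k$ are independent. Each such subtree rooted at a type-$j$ mutant is distributed as the whole tree under $\P_j$. This is a branching property for stopping lines, in the sense of Jagers. I would justify it directly: every individual reproduces independently according to $\boldsymbol{\mu}_{\text{type}}$ from~\eqref{eq: mothdepproba}. The event that determines the $k$-th allelic generation mutants and everything above them involves only the offspring variables of individuals having at most $k-1$ mutations, together with the first-layer identity and type of each mutant. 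Hence the offspring variables of the descendants of those mutants are independent of this information.

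\emph{Step (i).} Let $\mathcal{G}_k$ be the $\sigma$-field generated by the offspring variables of all individuals with fewer than $k$ mutations on their ancestral line. Then $\VEC{M}_k$ and $\VEC{T}_{k-1}$ are $\mathcal{G}_k$-measurable. Conditionally on $\mathcal{G}_k$, the subtrees rooted at the $|\VEC{M}_k|$ mutants are independent, and a type-$j$ subtree has the law of the tree under $\P_j$. In such a subtree, the mutants of allelic generation $k+1$ lying below the root are exactly that subtree's first allelic generation mutants, and its clones in generation $k$ form its $\VEC{T}_0$. Therefore
\[
\VEC{M}_{k+1}=\sum_{j}\sum_{\ell=1}^{\VEC{M}_k(j)} \VEC{M}_1^{(j,\ell)},
\]
where, conditionally on $\mathcal{G}_k$, the summands are independent copies of $\VEC{M}_1$ under $\P_j$. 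This is the definition of a $d$-type BGW process, and it gives (i). Finiteness of $\VEC{M}_1$ follows from~\eqref{hyp:size}: the clone family of a mutant is a subcritical BGW tree with mean $(1-r)m<1$, hence finite, and each clone has finitely many mutant children.

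\emph{Step (ii).} By (i), $|\VEC{M}_{k+1}|$ is a sum over the $|\VEC{M}_k|$ mutants of $|\VEC{M}_1^{(j,\ell)}|$, so it suffices to show that the law of $|\VEC{M}_1|$ under $\P_j$ does not depend on $j$. I would verify this through generating functions. Under $\P_j$, $|\VEC{M}_1|$ is the total number of mutant children of the clone family. The clone family is the BGW tree with offspring $\boldsymbol{\xi}(j)$, and each member has mutant-child count $\xi^{(m)}_j$. By~\eqref{eq: mothdepproba}, the joint law of $(\xi^{(c)}_j,\xi^{(m)}_j)$ equals $\pi_{k,l}$ in~\eqref{eq:lawreptypnot}, independently of $j$ and of $\VEC{P}$. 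It follows that $|\VEC{M}_1|$ has the same law as Bertoin's first mutant generation, independent of the type. The reproduction law should then be identified with the common law of the total mutant count of one allelic family, denoted $\mu^{(m)}$ in the statement. I expect this identification is the only point requiring care in reading the statement: $\mu^{(m)}$ must be interpreted as the law of $|\VEC{M}_1|$ under $\P_i$, which is type-independent, rather than the one-step law of $\xi_i^{(m)}$. I would state this reading explicitly.

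\emph{Step (iii).} Condition on $\mathcal{G}_k$, which contains $\sigma(\VEC{T}_0,\VEC{M}_1,\dots,\VEC{T}_{k-1},\VEC{M}_k)$. By the decomposition in Step (i),
\[
(\VEC{T}_k,\VEC{M}_{k+1})=\sum_{j,\ell}\bigl(\VEC{T}_0^{(j,\ell)},\VEC{M}_1^{(j,\ell)}\bigr),
\]
with independent summands depending only on $\VEC{M}_k$. Its conditional law is therefore that of $(\VEC{T}_0,\VEC{M}_1)$ under $\P_{\VEC{M}_k}$, by the branching property of the model started from $\VEC{v}$, where the roots' trees are independent. This gives the Markov property and~\eqref{probatranTM}, with no dependence on $\VEC{u}$. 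The constraint $\VEC{v}\boldsymbol{\le}\VEC{x}$ just records that $\VEC{T}_k\ge\VEC{M}_k$, since each mutant counts in its own family. The main obstacle is stating the allelic-generation branching property rigorously; I would handle it through the Ulam--Harris labelling and the independence of the offspring variables indexed by vertices.
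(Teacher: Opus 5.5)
Your proposal is correct and follows the paper's route. The paper only invokes Jagers' general branching property for the stopping lines given by allelic generations, as in the uniform case of BFH1 and Bertoin's Lemma 1, and you make that same structural fact explicit through Ulam--Harris labelling and the decomposition $\VEC{M}_{k+1}=\sum_{j,\ell}\VEC{M}_1^{(j,\ell)}$.

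Your reading of (ii) is also the right one. The reproduction law of $(|\VEC{M}_k|)$ is the type-independent law of $|\VEC{M}_1|$ under $\P_i$, with p.g.f.\ $h$ and mean $\rho=rm/(1-(1-r)m)$. It is not the one-step law of $\xi_i^{(m)}$, whose mean is $rm$, so the statement's reference to $\mu^{(m)}$ has to be interpreted that way.
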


Since the construction of the associated $d$-type BGW tree with offspring distribution~\eqref{eq: mothdepproba} is identical to that in the uniform case
of~\cite{BFH1}, the Markov property in Proposition~\ref{prop:GTyMMarkov} follows
directly from the general branching property of~\cite[Equation 1]{jagers2008}.
Proposition~\ref{prop:GTyMMarkov} is therefore a straightforward generalisation of~\cite[Lemma 3.2]{BFH1}, which in turn extends~\cite[Lemma 1]{Bertoin10} to the finite-type mutation framework. 
\begin{remark}\label{re: sameextime}
    Note that if $\VEC{M}_0=\VEC{0}$, then $\VEC{T}_0=\VEC{0}$. Hence, using the transition probabilities \eqref{probatranTM}, $\VEC{M}_k=\VEC{0}$ implies $\VEC{T}_k=\VEC{0}$. On the other hand, as by construction $|\VEC{M}_k| \leq |\VEC{T}_k|$, we have that $\VEC{T}_k=\VEC{0}$ implies $\VEC{M}_k=\VEC{0}$. Therefore 
    \begin{equation*}
        \tau_{(\VEC{T},\VEC{M})}\coloneqq\inf\{k \in \N_0 : (\VEC{T}_{k},\VEC{M}_{k+1})=\VEC{0} \}=\tau_{\VEC{M}},
    \end{equation*}
    where $\tau_{\VEC{M}}$ is defined in \eqref{eq: extime}.
\end{remark}

The main goal of this section is to describe the transitions of the clone-mutant chain and their dependency  only on the $d$-type mutant chain via their respective probability generating functions (p.g.f.s).
Most results in this section directly extend the analysis of the clone-mutant Markov
chain in~\cite{BFH1} to the non-uniform setting.

For a fixed type $i \in [d]$, the p.g.f. of $(\VEC{T}_0, \VEC{M}_{1})$ starting from
$\VEC{M}_0=\VEC{e}_i$ is
\begin{equation}\label{eq:MGFone}
\varphi^{(i)}( x, \VEC{y})
	\coloneqq
	\E_{i}\left[ x^{ \VEC{T}_{0}(i) }\prod_{j\neq i} \VEC{y}(j)^{ \VEC{M}_1(j) } \right],
	\qquad x \in [0,1],\ \VEC{y} \in [0,1]^d;   
\end{equation}

note that $\varphi^{(i)}(x,\VEC{y}) = \varphi^{(i)}(x,\overline{\VEC{y}}^{\,i})$, since the
product involves only the coordinates $j\neq i$.
We then define
\[
	\boldsymbol{\varphi}( \VEC{x}, \VEC{y})
	\coloneqq
	\big( \varphi^{(1)}(\VEC{x}(1),\overline{\VEC{y}}^{\,1}),\dots, \varphi^{(d)}(\VEC{x}(d),\overline{\VEC{y}}^{\,d}) \big),
	\qquad (\VEC{x},\VEC{y}) \in [0,1]^d \times [0,1]^d.
\]
Let $g^{(i)}$ be the p.g.f. of the offspring distribution $\boldsymbol{\mu}_i$ for
individuals of type $i$ given in~\eqref{eq: mothdepproba}, that is,
\begin{equation}\label{eq: g_i}
	g^{(i)}(\VEC{s})
	\coloneqq
	\E_{i}\left[ \VEC{s}(i)^{ \boldsymbol{\xi}^{i}(i) } \prod_{j\neq i} \VEC{s}(j)^{ \boldsymbol{\xi}^{i}(j) } \right]
	= \sum_{\VEC{v} \in \N_0^d} \VEC{s}^{\VEC{v}}\, \boldsymbol{\mu}_i (\VEC{v}),
	\qquad \VEC{s} \in [0,1]^d.    
\end{equation}
We shall write $g^{(i)}(\VEC{s})=g_{i}( \VEC{s}(i), \overline{\VEC{s}}^{\,i})$ to emphasise
the roles of clones and mutants.

The following proposition extends~\cite[Proposition 3.1]{BFH1} to the non-uniform setting. We reproduce the proof in Appendix~\ref{app: proofpgf T_0Mvec} for completeness.

\begin{proposition}\label{prop: genlawT_0M_1}
	Consider the general mother-dependent neutral mutations model under the law $\P_{i}$.
	Then the probability generating function of $(\VEC{T}_0(i), \VEC{M}_1)$ satisfies
	the equation
	\begin{equation}\label{eq: pgfTM_i}
		\varphi^{(i)}( x, \overline{\VEC{y}}^{\,i}) = x\, g^{(i)}\bigl(\varphi^{(i)}(x, \overline{\VEC{y}}^{\,i}), \overline{\VEC{y}}^{\,i}\bigr),
		\qquad x\in [0,1],\ \VEC{y}\in [0,1]^d;
	\end{equation}
	and the distribution of $(\VEC{T}_0(i), \VEC{M}_1)$ is given by
	\begin{equation}\label{eq: distT_0M_1}
		\P_{i}(\VEC{T}_0=k\VEC{e}_i,\, \VEC{M}_1=\boldsymbol{\ell})
		= \frac{1}{k}\,\boldsymbol{\mu}_i^{*k}(\boldsymbol{\ell} + (k-1)\VEC{e}_i),
		\qquad k \in \N,\ \boldsymbol{\ell} \in \overline{S}^{\,i},
	\end{equation}
	where $\boldsymbol{\mu}_i^{*k}$ denotes the $k$-th convolution power of
	$\boldsymbol\mu_i$, which corresponds to
	\begin{align}
		\boldsymbol{\mu}_i^{*k}(\boldsymbol{\ell} + (k-1)\VEC{e}_i)
		&= (\mu^{+})^{*k}(|\boldsymbol{\ell}| + k-1)\binom{|\boldsymbol{\ell}| + k-1}{k-1}(1-r)^{k-1} r^{\vert \boldsymbol{\ell} \vert}\binom{|\boldsymbol{\ell}|}{\boldsymbol{\ell}(1),\dots,\boldsymbol{\ell}(d)}\prod_{j\neq i}\big( \VEC{P}_{ij} \big)^{\boldsymbol{\ell}(j)}.
		\label{eq: mu_iconvo}
	\end{align}
\end{proposition}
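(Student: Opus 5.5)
The plan is to argue on the allelic subfamily of the root, viewed as a subtree of clones with mutant children attached, and then derive the explicit law by Lagrange inversion.

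\emph{Step 1: the functional equation.} Under $\P_{i}$, the clonal subfamily of the root is the set of type-$i$ individuals connected to the root by monochromatic edges. $\VEC{T}_0(i)$ is its size and $\VEC{M}_1$ counts the mutant children of its members. I would decompose at the root. The root has $\boldsymbol{\xi}^{i}$ children: $\boldsymbol{\xi}^{i}(i)$ clones and $\boldsymbol{\xi}^{i}(j)$ mutants of type $j\neq i$. By the branching property, each clone child starts an independent copy of the same pair $(\VEC{T}_0(i),\VEC{M}_1)$, which is also independent of $\boldsymbol{\xi}^{i}$. Hence
\[
\VEC{T}_0(i)=1+\sum_{c=1}^{\boldsymbol{\xi}^{i}(i)} T^{(c)}, \qquad \VEC{M}_1=\overline{\boldsymbol{\xi}^{i}}^{\,i}+\sum_{c=1}^{\boldsymbol{\xi}^{i}(i)} M^{(c)}.
\]
Taking generating functions, with conditioning on $\boldsymbol{\xi}^{i}$, gives $\varphi^{(i)}=x\,g^{(i)}(\varphi^{(i)},\overline{\VEC{y}}^{\,i})$, which is \eqref{eq: pgfTM_i}. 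Assumption~\eqref{hyp:size} says the clone-only process is subcritical, so $\VEC{T}_0(i)<\infty$ almost surely and the recursion is well founded. For $x\in[0,1)$, $\varphi^{(i)}(x,\cdot)$ is the unique root in $[0,1]$ of $w=x\,g_i(w,\overline{\VEC{y}}^{\,i})$, since $w\mapsto xg_i(w,\cdot)$ is a contraction there when $x<1$.

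\emph{Step 2: the explicit law.} Fix $\overline{\VEC{y}}^{\,i}$ and set $h(w)=g_i(w,\overline{\VEC{y}}^{\,i})$. Then $w=xh(w)$, and Lagrange inversion gives
\[
[x^k]\,\varphi^{(i)}=\tfrac1k\,[w^{k-1}]\,h(w)^k .
\]
Now $h(w)^k=\sum_{\VEC{v}}\boldsymbol{\mu}_i^{*k}(\VEC{v})\,w^{\VEC{v}(i)}\prod_{j\ne i}\VEC{y}(j)^{\VEC{v}(j)}$. Extracting the coefficient of $w^{k-1}\prod_{j\neq i}\VEC{y}(j)^{\boldsymbol{\ell}(j)}$ gives \eqref{eq: distT_0M_1}. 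An alternative route is the cyclic lemma / hitting-time theorem on the Łukasiewicz path of the clonal subtree, where the sum of the $k$ clone-offspring counts equals $k-1$. Since the coefficient extraction is formal, the variables $\VEC{y}(j)$ can be treated as parameters.

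\emph{Step 3: the convolution formula \eqref{eq: mu_iconvo}.} Observe that $g^{(i)}(\VEC{s})=\sum_n\mu^+(n)\bigl((1-r)\VEC{s}(i)+r\sum_{j\ne i}\VEC{P}_{ij}\VEC{s}(j)\bigr)^n$. Hence
\[
(g^{(i)})^k=\sum_N(\mu^+)^{*k}(N)\Bigl((1-r)\VEC{s}(i)+r\sum_{j\neq i}\VEC{P}_{ij}\VEC{s}(j)\Bigr)^N .
\]
In other words, a sum of $k$ i.i.d.\ offspring vectors is a multinomial colouring of $N\sim(\mu^+)^{*k}$. Expanding multinomially with $N=|\boldsymbol{\ell}|+k-1$ gives the product of $\binom{N}{k-1}$ and the inner multinomial coefficient.

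\emph{Main obstacle.} The delicate point is justifying Lagrange inversion with the extra variables $\overline{\VEC{y}}^{\,i}$, together with uniqueness of the solution. I would handle both by working in formal power series in $x$ with coefficients that are polynomials in $\VEC{y}$. There, $w=xh(w)$ has a unique formal solution with zero constant term, and the probabilistic $\varphi^{(i)}$ is that solution, since its coefficients are determined recursively by Step 1.
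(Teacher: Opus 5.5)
Your proposal is correct and follows essentially the same route as the paper: the branching decomposition at the root of its clonal subfamily gives \eqref{eq: pgfTM_i}, and Lagrange inversion followed by coefficient extraction in $(g^{(i)})^k$ gives \eqref{eq: distT_0M_1}. Two small differences are worth noting. Working formally lets you skip the paper's analytic check that $g_i(0,\overline{\VEC{y}}^{\,i})\neq 0$, though the coefficient ring should be $\mathbb{R}[[\overline{\VEC{y}}^{\,i}]]$ rather than polynomials in $\VEC{y}$, since the coefficients are genuine power series when $\mu^+$ has unbounded support. Your Step~3 derives \eqref{eq: mu_iconvo} from $g^{(i)}(\VEC{s})=G\bigl((1-r)\VEC{s}(i)+r\sum_{j\neq i}\VEC{P}_{ij}\VEC{s}(j)\bigr)$, with $G$ the p.g.f.\ of $\mu^+$, whereas the paper only asserts this formula.
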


The proof of Proposition~\ref{prop: genlawT_0M_1} follows the arguments
of~\cite[Appendix B]{BFH1}, which remain valid under hypothesis~\eqref{hyp:size}.
Furthermore, applying the arguments of~\cite[Appendix B.2]{BFH1} yields the analogue
of~\cite[Corollary 3.2]{BFH1} for the general model.
In particular, by combining this with~\eqref{eq: expectation}, we obtain the following
result.

\begin{corollary} \label{cor: cor 3.4}
	Assume~\eqref{hyp:size}. For each $i \in [d]$,
	\[
		\E_{i} \big[\VEC{T}_0(i)\big]
		= \frac{1}{1-m(1-r)}
		\qquad \text{and} \qquad
		\E_{i} \big[\VEC{M}_1(j) \big]
		= \frac{rm}{1-m(1-r)}\VEC{P}_{ij}
		\quad \text{for each } j \in [d] \setminus \{i\}.
	\]
\end{corollary}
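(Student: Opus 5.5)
The plan is to differentiate the functional equation~\eqref{eq: pgfTM_i} of Proposition~\ref{prop: genlawT_0M_1} at the point $x=1$, $\VEC{y}=\VEC{1}$, and then solve the resulting linear equations for the first moments.

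First, observe that $\varphi^{(i)}(1,\VEC{1})=1$. By~\eqref{hyp:size} the clone subfamily is a subcritical BGW tree with mean $(1-r)m<1$, so $\VEC{T}_0(i)<\infty$ almost surely. Next, write $h(x)\coloneqq\varphi^{(i)}(x,\VEC{1})$, which is the p.g.f.\ of $\VEC{T}_0(i)$. Setting $\VEC{y}=\VEC{1}$ in~\eqref{eq: pgfTM_i} gives $h(x)=x\,g^{(i)}(h(x),\VEC{1})$, where $s\mapsto g^{(i)}(s,\VEC{1})$ is the p.g.f.\ of $\boldsymbol{\xi}^i(i)$. Differentiating for $x<1$ yields
\[
h'(x)=g^{(i)}(h(x),\VEC{1})+x\,\partial_1 g^{(i)}(h(x),\VEC{1})\,h'(x).
\]
Solving for $h'(x)$, letting $x\uparrow 1$ and using monotone convergence together with~\eqref{eq: expectation}, we get
\[
h'(1^-)=\frac{1}{1-(1-r)m}.
\]

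The main obstacle is justifying that $h'(1^-)$ is finite, so that this limit is legitimate. Since $\partial_1 g^{(i)}(h(x),\VEC{1})\le (1-r)m<1$ for $x\le1$, the identity gives
\[
h'(x)\bigl(1-x\,\partial_1g^{(i)}\bigr)=g^{(i)}(h(x),\VEC{1})\le 1,
\]
and hence $h'(x)\le 1/(1-(1-r)m)$ uniformly in $x<1$. This yields finiteness, and the limit then holds with equality. As an alternative, the subcritical total progeny formula $\E[\text{total progeny}]=1/(1-\text{mean})$ can be quoted directly.

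For the mutants, fix $j\neq i$ and differentiate~\eqref{eq: pgfTM_i} in $\VEC{y}(j)$ at $x=1$, with the other coordinates of $\VEC{y}$ set to $1$. Put $\psi(y)\coloneqq\varphi^{(i)}(1,\VEC{1}+(y-1)\VEC{e}_j)$. Then
\[
\psi'(y)=\partial_1g^{(i)}\,\psi'(y)+\partial_{j}g^{(i)},
\]
with both partial derivatives evaluated at $(\psi(y),\VEC{1}+(y-1)\VEC{e}_j)$. The same uniform bound argument gives finiteness. Letting $y\uparrow1$ gives
\[
\psi'(1)\bigl(1-(1-r)m\bigr)=\E[\boldsymbol{\xi}^i(j)]=mr\VEC{P}_{ij},
\]
which is the second identity.

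As a probabilistic cross-check, one can also argue via Wald's identity. Each of the $\VEC{T}_0(i)$ clones independently has $\mathrm{Bin}$-thinned mutant children of type $j$ with mean $mr\VEC{P}_{ij}$. Since $\VEC{T}_0(i)$ is a stopping-type count in a breadth-first exploration, this gives $\E_i[\VEC{M}_1(j)]=\E_i[\VEC{T}_0(i)]\,mr\VEC{P}_{ij}$, consistent with the computation above.
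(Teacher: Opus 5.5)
Your proof is correct and follows essentially the paper's route: the paper transfers the moment computation of \cite[Appendix B.2]{BFH1}, based on the generating-function identity of Proposition~\ref{prop: genlawT_0M_1}, to the general model and then inserts the offspring means \eqref{eq: expectation}, which is what your implicit differentiation of \eqref{eq: pgfTM_i} at $x=1$, $\VEC{y}=\VEC{1}$ does, and your uniform derivative bound using $(1-r)m<1$ from \eqref{hyp:size} is a clean self-contained way to justify finiteness. One small point: your $h$ clashes with the paper's $h$, which is the p.g.f.\ of $|\VEC{M}_1|$, so rename it.
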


Having established these preliminary results, we now turn to a simple (but useful)
relationship between $(\VEC{T}_0, \VEC{M}_1)$ and $\VEC{M}_1$ through their p.g.f.s:
\begin{equation}\label{eq: pgfM_i}
	f^{(i)} ( \VEC{y} ) \coloneqq
	\E_{i}\big[\VEC{y}^{\VEC{M}_1} \big] = \varphi^{(i)}( 1, \VEC{y}),
	\qquad \VEC{y}\in [0,1]^d.
\end{equation}
For simplicity of notation we will write $\VEC{f} \coloneqq (f^{(1)}, \dots, f^{(d)})$.

\subsection{Mutant-count Markov chain}

The goal of this subsection is to establish that the infinite-alleles mutant chain
$(M_k,\, k \in \N_0)$ of~\cite{Bertoin10} and the mutant-count process
$(|\VEC{M}_k|,\, k \in \N_0)$ defined above are equal in law.
As a first step, observe that the last two factors on the right-hand side
of~\eqref{eq: mu_iconvo} together yield the probability mass function of a multinomial
distribution. Hence, from~\eqref{eq: distT_0M_1}, the sum over all vectors
$\boldsymbol{\ell}$ with $\vert\boldsymbol{\ell}\vert=n$ simplifies to
\begin{equation*}
	\P_{i}(\VEC{T}_0=k\VEC{e}_i, |\VEC{M}_1|=n)
	=\frac{1}{k}(\mu^+)^{*k}(n + k-1)\binom{n + k-1}{k-1}(1-r)^{k-1}r^n .
\end{equation*}
By comparing the last equality with~\eqref{eq:lawreptypnot}, we deduce that
\[
	\P_{i}(\VEC{T}_0=k\VEC{e}_i, |\VEC{M}_1|=n)
	= \frac{1}{k}\pi^{*k}_{k-1,n},
\]
where $\pi^{*k}$ denotes the $k$-fold convolution of the measure $\pi$ given
in~\eqref{eq:lawreptypnot}.
Thus, regardless of the type of the initial individual, the right-hand side coincides
with the distribution of $(T_0, M_1)$ in~\cite[Proposition 1(ii)]{Bertoin10}.
Taking the marginal distribution then shows that the total number of mutants
$|\VEC{M}_1|$ in the finite-type mutation model has the same law as $M_1$ in the
infinite-type model started from a single individual (of any type).
By the branching property, we obtain the following result.

\begin{lemma}\label{lem: eqMand|VECM|}
	Starting from one individual, regardless of its type, we have that
	\[
		(|\VEC{M}_k|,\, k\in \N_0) \overset{\mathcal{L}}{=} (M_k,\, k\in \N_0).
	\]
	Thus $(|\VEC{M}_k|,\, k\in \N_0)$ is a single-type BGW process.
	Furthermore, for each $i \in [d]$ and $n\in\N_0$,
	\begin{equation}\label{eq:condtomcpmatix}
		\VEC{M}_1\big|_{\{\VEC{M}_0=\VEC{e}_i,\,|\VEC{M}_1|=n\}}\sim \mathcal{M}(n,\VEC{P}_{i\cdot}),
	\end{equation}
	where $\mathcal{M}$ denotes the multinomial distribution and $\VEC{P}_{i\cdot}$ the
	$i$-th row of the mutation matrix $\VEC{P}$.
\end{lemma}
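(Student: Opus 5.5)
The plan has two parts: identify the one-step law of $|\VEC{M}_1|$ and the conditional law of $\VEC{M}_1$ from Proposition~\ref{prop: genlawT_0M_1}, and then lift the one-step statement to the whole process using the branching property of Proposition~\ref{prop:GTyMMarkov}.

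\textbf{Step 1: the joint law and the conditional law.} Starting from $\VEC{e}_i$, I would sum \eqref{eq: distT_0M_1}, via \eqref{eq: mu_iconvo}, over $k\in\N$ and over $\boldsymbol{\ell}\in\overline{S}^{\,i}$ with $|\boldsymbol{\ell}|=n$. The factor $\binom{n}{\boldsymbol{\ell}(1),\dots,\boldsymbol{\ell}(d)}\prod_{j\neq i}\VEC{P}_{ij}^{\boldsymbol{\ell}(j)}$ is the $\mathcal{M}(n,\VEC{P}_{i\cdot})$ mass function. Since $\VEC{P}_{ii}=0$, it vanishes off $\overline{S}^{\,i}$, so it sums to $1$ over this set. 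The remaining factor does not depend on $\boldsymbol{\ell}$ beyond $|\boldsymbol{\ell}|=n$, so the joint law factorizes:
\[
\P_i(\VEC{T}_0=k\VEC{e}_i,\VEC{M}_1=\boldsymbol{\ell})=\P_i(\VEC{T}_0=k\VEC{e}_i,|\VEC{M}_1|=n)\,\mathcal{M}(n,\VEC{P}_{i\cdot})(\boldsymbol{\ell}).
\]
Summing over $k$ then gives \eqref{eq:condtomcpmatix} directly. It also shows that the law of $|\VEC{M}_1|$ under $\P_i$ is $\sum_k \frac1k\pi^{*k}_{k-1,n}$, which by \cite[Proposition 1(ii)]{Bertoin10} is the law of $M_1$. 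In particular this law is independent of $i$, and we call it $\mu^{(m)}$.

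\textbf{Step 2: lifting to the whole process.} By Proposition~\ref{prop:GTyMMarkov}(i), conditionally on $\VEC{M}_k=\VEC{v}$, the vector $\VEC{M}_{k+1}$ is a sum of independent copies of $\VEC{M}_1$: $\VEC{v}(j)$ of them under $\P_j$, for each $j$. Hence $|\VEC{M}_{k+1}|$ is a sum of $|\VEC{v}|$ independent variables. Each of them has law $\mu^{(m)}$ by Step~1, whatever its type.

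I would make this precise with the p.g.f. $\VEC{f}$ of \eqref{eq: pgfM_i}. For a scalar $s$, Step~1 gives $f^{(i)}(s\VEC{1})=h(s)$ for every $i$, where $h$ is the p.g.f. of $\mu^{(m)}$. Since $\VEC{f}(s\VEC{1})=h(s)\VEC{1}$, induction gives $\VEC{f}_n(s\VEC{1})=h_n(s)\VEC{1}$. Consequently, for every $\VEC{v}$ and $s\in[0,1]$,
\[
\E\big[s^{|\VEC{M}_{k+1}|}\,\big|\,\VEC{M}_0,\dots,\VEC{M}_k\big]=\prod_j f^{(j)}(s\VEC{1})^{\VEC{M}_k(j)}=h(s)^{|\VEC{M}_k|}.
\]
This identity shows that $(|\VEC{M}_k|)$ is Markov with respect to the larger filtration of $\VEC{M}$, with the BGW transition of reproduction law $\mu^{(m)}$. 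Bertoin's chain $(M_k)$ is itself a BGW process with this law. The finite-dimensional distributions therefore agree from $|\VEC{M}_0|=1=M_0$.

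\textbf{Main obstacle.} The only delicate point is that $|\VEC{M}_k|$ is a function of a multitype Markov chain, and such a function is Markov only if the lumpability condition holds. That condition is exactly the type-independence of the law of $|\VEC{M}_1|$ from Step~1, and the displayed conditional p.g.f. identity is what establishes it. This also proves Proposition~\ref{prop:GTyMMarkov}(ii).
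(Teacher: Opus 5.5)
Your proposal is correct and follows the paper's own route: summing out the multinomial factor in \eqref{eq: mu_iconvo} gives the factorization, hence \eqref{eq:condtomcpmatix} and the identification of the law of $|\VEC{M}_1|$ with that of $M_1$ via \cite[Proposition 1(ii)]{Bertoin10}, and your conditional p.g.f.\ identity $\E[s^{|\VEC{M}_{k+1}|}\mid \VEC{M}_0,\dots,\VEC{M}_k]=h(s)^{|\VEC{M}_k|}$ is a careful version of the paper's one-line appeal to the branching property. One caution on notation: in the paper $\mu^{(m)}$ already denotes the law of the number $\xi_i^{(m)}$ of mutant children of a single individual (mean $rm$), which is not the law of $|\VEC{M}_1|$ (p.g.f.\ $h$, mean $\rho$), so choose another name, and note that your closing remark about Proposition~\ref{prop:GTyMMarkov}(ii) holds only if its reproduction law is read as the one with p.g.f.\ $h$.
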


Next, we aim to establish a relation between the mean of the mutant-count process
$(|\VEC{M}_k|,\, k\in \N_0)$ and the maximal eigenvalue of the mean matrix of
$(\VEC{M}_k,\, k\in \N_0)$, which characterise the behaviour of the process. To this
end, we introduce the p.g.f. of $\vert \VEC{M}_1 \vert$, given by
\begin{equation}\label{eq: pgf |m|}
	h(s) \coloneqq \E_{i}\left[s^{ |\VEC{M}_1| }\right], \qquad s\in [0,1],\ \forall\, i\in [d].
\end{equation}

\begin{lemma}\label{prop: mutbackfor}
	For each $i\in[d]$, the probability generating function $h$ can be expressed in terms
	of the generating function $f^{(i)}$ as
	\begin{equation*}
		h(s) = f^{(i)}(s\VEC{1}), \qquad s\in [0,1].
	\end{equation*}
	Conversely, given the probability generating function $h$ and the mother-dependent
	mutation matrix $\VEC{P}$, we recover $f^{(i)}$ by
	\begin{equation*}
		f^{(i)}(\VEC{y})= h\big( \big[\VEC{P}\VEC{y}\big]_i \big), \qquad \VEC{y} \in [0,1]^d,
	\end{equation*}
	where $\big[\VEC{P}\VEC{y}\big]_i$ denotes the $i$-th entry of the vector
	$\VEC{P}\VEC{y}$.
\end{lemma}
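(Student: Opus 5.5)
The plan is to work directly from the definition \eqref{eq: pgfM_i} of $f^{(i)}$ together with the conditional multinomial structure \eqref{eq:condtomcpmatix} established in Lemma~\ref{lem: eqMand|VECM|}. Both identities will follow from one conditioning computation: condition on $|\VEC{M}_1|$ and evaluate the multinomial generating function.

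For the first identity, take $\VEC{y}=s\VEC{1}$ in \eqref{eq: pgfM_i}. Then
\[
\VEC{y}^{\VEC{M}_1}=\prod_j s^{\VEC{M}_1(j)}=s^{|\VEC{M}_1|},
\]
so $f^{(i)}(s\VEC{1})=\E_i[s^{|\VEC{M}_1|}]=h(s)$. This uses only the definitions. The fact that $h$ does not depend on $i$ has already been established before Lemma~\ref{lem: eqMand|VECM|}: the law of $|\VEC{M}_1|$ under $\P_i$ is the same for every $i$, namely the law of $M_1$ in Bertoin's model.

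For the converse, fix $\VEC{y}\in[0,1]^d$ and condition on $|\VEC{M}_1|=n$. By \eqref{eq:condtomcpmatix}, under $\P_i$ and given $|\VEC{M}_1|=n$, $\VEC{M}_1$ is $\mathcal{M}(n,\VEC{P}_{i\cdot})$. The multinomial theorem then gives
\[
\E_i\big[\VEC{y}^{\VEC{M}_1}\,\big|\,|\VEC{M}_1|=n\big]=\Big(\sum_j \VEC{P}_{ij}\VEC{y}(j)\Big)^n=\big([\VEC{P}\VEC{y}]_i\big)^n .
\]
Summing over $n$ against $\P_i(|\VEC{M}_1|=n)$, and applying the tower property, yields $f^{(i)}(\VEC{y})=h([\VEC{P}\VEC{y}]_i)$. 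Since $\VEC{P}$ is stochastic, $[\VEC{P}\VEC{y}]_i\in[0,1]$, so $h$ is evaluated inside its domain. Because $\VEC{P}_{ii}=0$, the coordinate $\VEC{y}(i)$ does not enter. This is consistent with $\VEC{M}_1\in\overline{S}^{\,i}$ almost surely.

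There is no real obstacle here. The only point needing care is that \eqref{eq:condtomcpmatix} holds for every $n$ with $\P_i(|\VEC{M}_1|=n)>0$; the terms with zero probability do not contribute to the sum. The case $n=0$ is handled by the convention $0^0=1$. If one prefers not to rely on \eqref{eq:condtomcpmatix}, the same conclusion can be read off \eqref{eq: distT_0M_1}--\eqref{eq: mu_iconvo}. There one sums over $k$ and over $\boldsymbol{\ell}$ with $|\boldsymbol{\ell}|=n$, and the last two factors of \eqref{eq: mu_iconvo} form exactly the multinomial weights whose generating function is $([\VEC{P}\VEC{y}]_i)^n$.
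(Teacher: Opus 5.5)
Your proposal is correct and follows the paper's proof essentially verbatim. You substitute $\VEC{y}=s\VEC{1}$ for the first identity; for the converse you condition on $|\VEC{M}_1|$, use the multinomial conditional law \eqref{eq:condtomcpmatix} from Lemma~\ref{lem: eqMand|VECM|} to get $\big([\VEC{P}\VEC{y}]_i\big)^n$, sum against the law of $|\VEC{M}_1|$, and use $\VEC{P}_{ii}=0$. Your extra remarks (that $[\VEC{P}\VEC{y}]_i\in[0,1]$, the $n=0$ case, and the null-probability terms) are harmless refinements that the paper leaves implicit.
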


\begin{proof}
	Note that for any $i \in [d]$, the p.g.f. of $|\VEC{M}_1|$ is given by
	\begin{equation*}
		h(s)
		= \E_{i}\left[s^{ \sum_{j=1}^d \VEC{M}_1(j) } \right]
		=\E_{i}\left[\prod_{j=1}^d s^{ \VEC{M}_1(j) } \right]= f^{(i)}(s\VEC{1}).
	\end{equation*}
	On the other hand, we have that
	\begin{equation*}
		\begin{split}
			f^{(i)}(\VEC{y})
			&= \E_{i}\big[\VEC{y}^{\VEC{M}_1}\big] = \E_{i}\big[ \E_{i}\big( \VEC{y}^{\VEC{M}_1} \big| |\VEC{M}_1| \big) \big] \\
            &= \sum_{k=0}^\infty \P_{i}\big( |\VEC{M}_1|=k\big) \sum_{\VEC{z} \in \overline{S}^{\,i}} \P_{i}\big( \VEC{M}_{1}= \VEC{z} \big| |\VEC{M}_1|=k\big) \VEC{y}^{\VEC{z}}.
		\end{split}
	\end{equation*}
    
	Using Lemma~\ref{lem: eqMand|VECM|}, we have that
	\[
		\sum_{\VEC{z} \in \overline{S}^{\,i}}\P_{i}\big( \VEC{M}_{1} = \VEC{z} \big| |\VEC{M}_1|=k\big) \VEC{y}^{\VEC{z}}
		= \E_{i}\big[\VEC{y}^{\VEC{M}_1}\big| |\VEC{M}_1|=k \big] =\big[\sum_{j\neq i} \VEC{P}_{ij} \VEC{y}(j)\big]^{k},
	\]
	and from here
	\begin{equation*}
		f^{(i)}(\VEC{y})
		= \sum_{k=0}^\infty \P_{i}\big( |\VEC{M}_1|=k\big) \big[\sum_{j\neq i} \VEC{P}_{ij} \VEC{y}(j)\big]^{k}
		= h\big(\sum_{j\neq i} \VEC{P}_{ij} \VEC{y}(j)\big),
	\end{equation*}
	which is the announced identity, since $\VEC{P}_{ii}=0$.
\end{proof}
\begin{remark}\label{re : h pgf M}
	Lemmas~\ref{lem: eqMand|VECM|} and~\ref{prop: mutbackfor} together imply that $h$ is
	the p.g.f. of the offspring distribution of the BGW process $(M_k,\, k \in \N_0)$
	defined in~\cite{Bertoin10}.
	This function $h$ was further analyzed in~\cite[Section~2]{BlancasRivero}, where it
	played a key role in establishing the main results.
\end{remark}
The following proposition establishes the mean behaviour of the mutant chain.
\begin{proposition}\label{prop: meanmxrho}
	Assume~\eqref{hyp:size} and let $\VEC{A}$ be the mean matrix of the $d$-type
	BGW process $(\VEC{M}_k,\, k\in\N_0)$, that is,
	\[
		\VEC{A}_{ij}\coloneqq \E_{i}\big[\VEC{M}_1(j) \big],\qquad i,j \in [d].
	\]
	Then $\VEC{A}=\rho\,\VEC{P}$ and its maximal eigenvalue is
	\begin{equation}\label{eq: rho}
		\rho \coloneqq \frac{rm}{1-(1-r)m} = \E_{i}\big[|\VEC{M}_1|\big], \qquad i \in [d],
	\end{equation}
	with right eigenvector $\VEC{1}$ and left eigenvector $\VEC{u}$.
	Moreover, $( \VEC{M}_k,\, k \in\N_0 )$ is subcritical if and only if $m < 1$, and
	critical if and only if $m = 1$. Otherwise, the process is supercritical.
\end{proposition}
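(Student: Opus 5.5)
The plan is to read off the mean matrix $\VEC{A}$ from Corollary~\ref{cor: cor 3.4}, and then get the spectral and criticality statements from the structure of $\VEC{P}$ and the Perron--Frobenius theorem.

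\textbf{Mean matrix.} By Corollary~\ref{cor: cor 3.4}, for $j\neq i$ we have $\VEC{A}_{ij}=\E_i[\VEC{M}_1(j)]=\frac{rm}{1-m(1-r)}\VEC{P}_{ij}=\rho\,\VEC{P}_{ij}$. For $j=i$, $\VEC{M}_1(i)=0$ almost surely under $\P_i$, since $\VEC{M}_1\in\overline{S}^{\,i}$. Hence $\VEC{A}_{ii}=0=\rho\VEC{P}_{ii}$, and so $\VEC{A}=\rho\VEC{P}$. Assumption~\eqref{hyp:size} ensures the denominator $1-(1-r)m$ is positive, so $\rho\in(0,\infty)$.

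\textbf{Eigenvalue and eigenvectors.} Since $\VEC{P}$ is stochastic and irreducible, its Perron root is $1$, with right eigenvector $\VEC{1}$ and left eigenvector $\VEC{u}$, as recalled in Subsection~\ref{sub:the_general_mother_dependent_model}. Multiplying by $\rho>0$ scales the spectrum, so the maximal eigenvalue of $\VEC{A}$ is $\rho$, with the same eigenvectors. For the identity $\rho=\E_i[|\VEC{M}_1|]$, we sum the $i$-th row: $\E_i[|\VEC{M}_1|]=\sum_j\VEC{A}_{ij}=\rho\sum_j\VEC{P}_{ij}=\rho$. Alternatively, this follows from $h'(1)$ via Lemma~\ref{prop: mutbackfor}. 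It is independent of $i$, consistent with Lemma~\ref{lem: eqMand|VECM|}.

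\textbf{Classification.} Positive regularity (Remark~\ref{re: asspr}) lets us use the classical multitype criterion from~\cite[Chapter V]{AthreyaNey}. The process is subcritical, critical or supercritical according as $\rho<1$, $\rho=1$ or $\rho>1$. There is one subtlety: the classical theory also excludes the singular case, but this is ruled out since $\mu^+(0)+\mu^+(1)<1$. So it remains to compare $\rho$ with $1$. Because $1-(1-r)m>0$, we have
\[
\rho<1 \iff rm<1-(1-r)m \iff m<1,
\]
and in the same way $\rho=1\iff m=1$ and $\rho>1\iff m>1$.

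The only step needing any care is this last one: checking that the standard criticality classification applies (positive regularity and nonsingularity) and that the sign of $1-(1-r)m$ is fixed by~\eqref{hyp:size}. Everything else is a direct computation from earlier results.
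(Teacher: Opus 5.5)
Your proof is correct and follows the paper's argument essentially step by step: you read off $\VEC{A}=\rho\VEC{P}$ from Corollary~\ref{cor: cor 3.4}, carry the Perron--Frobenius data of $\VEC{P}$ over to $\VEC{A}$, obtain $\E_i[|\VEC{M}_1|]=\rho$ as a row sum (the paper phrases this as $h'(1)$, the sum of the partial derivatives of $f^{(i)}$ at $\VEC{1}$), and conclude with the classical classification of \cite[Chapter V]{AthreyaNey}. Your explicit checks that $\VEC{A}_{ii}=0$, that $1-(1-r)m>0$ under~\eqref{hyp:size}, and that the process is nonsingular are details the paper leaves implicit, and you handle them correctly.
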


\begin{proof}
	By~\cite[Chapter V.2]{AthreyaNey} and Corollary~\ref{cor: cor 3.4}, the following
	equalities hold, respectively:
	\begin{equation}\label{eq: derivativef}
		\frac{\partial f^{(i)}(\VEC{y})}{\partial \VEC{y}(j)}\bigg|_{\VEC{y}=\VEC{1}}= \E_{i}\big[\VEC{M}_1(j) \big]= \frac{rm}{1-m(1-r)}\VEC{P}_{ij}.
	\end{equation}
	So $\VEC{A}_{ij}=\frac{rm}{1-m(1-r)} \VEC{P}_{ij}$ for all $i,j \in [d]$, and, since
	the maximal eigenvalue of $\VEC{P}$ is $\gamma=1$ with right eigenvector $\VEC{1}$
	and left eigenvector $\VEC{u}$, the maximal eigenvalue of $\VEC{A}$ is
	$\frac{rm}{1-m(1-r)}$, with the same eigenvectors.
	Moreover,
	\begin{equation*}
		h'(1)= \E_{i}\big[|\VEC{M}_1|\big]=\sum_{j=1}^d \frac{\partial f^{(i)}(\VEC{y})}{\partial \VEC{y}(j)}\bigg|_{\VEC{y}=\VEC{1}}=\frac{rm}{1-(1-r)m}.
	\end{equation*}
	Finally, by~\cite[Theorem V.3.2]{AthreyaNey}, the process $(\VEC{M}_k, k \in \N_0)$
	is \emph{supercritical}, \emph{critical}, or \emph{subcritical} according as
	$\rho > 1$, $\rho = 1$, or $\rho < 1$, respectively.
	A straightforward calculation then yields the statement of the proposition.
\end{proof}

\section{The $Q$-process associated with the mutant chain}\label{sec: backmulticond}

The fundamentals of multitype BGW processes conditioned on non-extinction in the
distant future were originally developed by \cite{nakagawa1978} and later
extended by \cite{penisson2014}. Therefore, the $Q$-process associated with
$(\VEC{M}_k,\, k\in\N_0)$ is well defined, and we will show that its distribution can be
described in terms of $(M_k^\uparrow,\, k\in \N_0)$ and a multinomial colouring, as we
point out in Figure~\ref{fig:conmDiagram}.
As a first step, we determine the extinction probability vector of the mutant chain in
the finite-alleles case in terms of the extinction probability of the mutant-count
process.

\subsection{Extinction of the mutant chain}

Recall that $\tau_\VEC{M}$ denotes the extinction time of the $d$-type mutant chain
$(\VEC{M}_k, k \in \N_0)$, defined by~\eqref{eq: extime}.
By Lemma~\ref{lem: eqMand|VECM|}, $\tau_\VEC{M}$ is equal in law to $\tau$, the
extinction time of $(M_k, k \in \N_0)$.
Henceforth, we assume hypothesis~\eqref{hyp:timext}.
Let
\begin{equation}\label{eq: exttotmut}
	\VEC{q}(i)\coloneqq \P_{i}(\tau_\VEC{M}<\infty) = \P_{i}\big(|\VEC{M}_k|=0 \text{ for some } k \in \N\big), \qquad i \in [d]
\end{equation}
be the \emph{extinction probability vector}.
Hypothesis~\eqref{hyp:timext} is equivalent to the assumption that $\VEC{q}$ satisfies
\begin{equation}\label{eq:extstricpost}
	\VEC{q}=(\VEC{q}(1), \dots,\VEC{q}(d) )> \VEC{0}.
\end{equation}
In the (sub)critical case $(m \le 1)$ we have $\VEC{q} = \VEC{1}$, whereas in the
supercritical case $(m > 1)$, $\VEC{q} \boldsymbol{<} \VEC{1}$;
see~\cite[Theorem~V.3.2(ii)]{AthreyaNey}.

Since the law of the mutant-count process is the same when starting from an individual
of any type, it is natural that the extinction probability of
$(|\VEC{M}_k|, k \in \N_0)$ coincides with that of a single-type BGW process with
offspring distribution $\mu^+$, corresponding to the underlying branching process of
the mother-dependent mutation model in which mutations occur.
The following result formalises this intuition.

\begin{proposition} \label{prop: q}
	Let $G(s)\coloneqq\sum_{n \in \N_0}\mu^+(n)s^n$, $s\in[0,1]$, be the p.g.f. of
	$\mu^+$, and let $q$ denote the extinction probability of the BGW process
	$(Z^{(+)}_k, k \in \N_0)$, that is, the smallest solution in $[0,1]$ of
	\begin{equation} \label{eq: pgfmu}
		G(s)=s.
	\end{equation}
	Then:
	\begin{itemize}
		\item[(i)] the $d$-type mutant chain $( \VEC{M}_k,\, k \in\N_0 )$ has extinction
		probability vector $\VEC{q} = q\VEC{1}$;
		\item[(ii)] the mutant-count chain $(|\VEC{M}_k|, k \in \N_0 )$ has extinction
		probability $q$.
	\end{itemize}
\end{proposition}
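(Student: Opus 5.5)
We first reduce (i) to (ii). By Lemma~\ref{lem: eqMand|VECM|}, the law of $(|\VEC{M}_k|,\,k\in\N_0)$ under $\P_i$ does not depend on $i$. Since $\tau_\VEC{M}$ is a functional of this mutant-count process,
\[
\VEC{q}(i)=\P_i(|\VEC{M}_k|=0 \text{ for some } k)
\]
is the same number $q_h$ for every $i$. Here $q_h$ is the extinction probability of the single-type BGW process with offspring p.g.f.\ $h$. Hence $\VEC{q}=q_h\VEC{1}$, and it remains to show $q_h=q$, that is, that the smallest fixed point of $h$ in $[0,1]$ coincides with the smallest fixed point of $G$.

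The key step is an implicit equation linking $h$ and $G$. Evaluate \eqref{eq: pgfTM_i} at $x=1$ and $\overline{\VEC{y}}^{\,i}=s\VEC{1}$, and combine with Lemma~\ref{prop: mutbackfor}, which gives $h(s)=\varphi^{(i)}(1,s\VEC{1})$. This yields $h(s)=g^{(i)}$ evaluated at clone coordinate $h(s)$ and mutant coordinates all equal to $s$. From \eqref{eq: mothdepproba}, summing the multinomial over the mutant types (the rows of $\VEC{P}$ sum to one),
\[
g^{(i)}(\VEC{s})=G\big((1-r)\VEC{s}(i)+r\textstyle\sum_{j\neq i}\VEC{P}_{ij}\VEC{s}(j)\big).
\]
Therefore
\[
h(s)=G\big((1-r)h(s)+rs\big),\qquad s\in[0,1].
\]
This is the analogue of Bertoin's identity in the infinite-alleles case, consistent with Remark~\ref{re : h pgf M}.

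Now compare the fixed points. If $s=h(s)$, the identity gives $s=G(s)$, so every fixed point of $h$ is a fixed point of $G$. Conversely, let $s$ satisfy $G(s)=s$. We claim $h(s)=s$. For fixed $s$, the value $h(s)$ is a solution $t$ of $t=G((1-r)t+rs)$. By \eqref{hyp:size}, the map $t\mapsto G((1-r)t+rs)$ on $[0,1]$ has derivative at most $(1-r)G'(1)=(1-r)m<1$, so it is a contraction. Its fixed point is therefore unique, and $t=s$ is one. Hence $h(s)=s$. The two fixed-point sets thus coincide, their smallest elements agree, and $q_h=q$. This proves (ii), and (i) follows from the first paragraph.

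The main obstacle is justifying that $h(s)$ is \emph{the} solution of the implicit equation, rather than merely some solution. The contraction argument under \eqref{hyp:size} handles this directly, and it is where assumption \eqref{hyp:size} is genuinely used. Finally, as a consistency check rather than a needed step, the case distinction $q=1$ iff $m\le1$ matches Proposition~\ref{prop: meanmxrho}, since $\rho\le 1$ iff $m\le 1$.
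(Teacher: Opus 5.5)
Your proof is correct, and it takes a genuinely different route from the paper's.

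\textbf{The paper's route.} The paper proves (i) first, working with the multitype generating function $\VEC{f}$. For a fixed point $\VEC{s}$ of $\VEC{f}$, it uses the fixed-point property itself to replace the clone coordinate $\varphi^{(i)}(1,\VEC{s})$ by $\VEC{s}(i)$ in \eqref{eq: pgfTM_i}. This gives $\VEC{s}(i)=G(\overline p_i)$, where $\overline p_i$ is a convex combination of the coordinates of $\VEC{s}$. It then squeezes the largest and smallest coordinates against the fixed points of $G$. Part (ii) is deduced afterwards from $h(q)=f^{(i)}(q\VEC{1})=q$, with minimality inherited from (i).

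\textbf{Your route.} You go the other way. Lemma~\ref{lem: eqMand|VECM|} immediately makes $\VEC{q}(i)$ independent of $i$, equal to the extinction probability of the BGW process with offspring p.g.f.\ $h$. Everything else is one-dimensional: the identity $h(s)=G((1-r)h(s)+rs)$, plus a contraction estimate with constant $(1-r)m<1$ from \eqref{hyp:size}.

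\textbf{What each buys.} Your route needs no multitype extinction theory. It also yields the stronger fact that $h$ and $G$ have the same fixed points in $[0,1]$. The paper's route stays inside the multitype framework of $\VEC{f}$ used later (e.g.\ for $\overline{\VEC{f}}$). It also shows how averaging through $\VEC{P}$ forces the coordinates of a fixed point to coincide.

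\textbf{The uniqueness issue.} The uniqueness issue you single out is also present, unaddressed, in the paper's argument. Its step ``conversely, $q\VEC{1}$ is a fixed point of $\VEC{f}$'' only checks that $q\VEC{1}$ satisfies the necessary condition $\VEC{s}(i)=G(\overline p_i)$. Proving $f^{(i)}(q\VEC{1})=q$ actually requires that $t=q$ be the only solution of $t=G((1-r)t+rq)$, which is exactly your contraction argument. Moreover, the max/min squeeze by itself does not exclude fixed points with a coordinate equal to $1$, such as $\VEC{1}$ when $q<1$. Your proof is complete as it stands.
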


\begin{proof}
	We begin with part~(i).
	By~\cite[Section V.3 Theorem 2 (iii)]{AthreyaNey}, the extinction probability vector $\VEC{q}$ satisfies $\VEC{f}(\VEC{q})=\VEC{q}$. Let $\VEC{s}\in[0,1]^d$ be an arbitrary point satisfying $\VEC{f}(\VEC{s})=\VEC{s}$. Then, using~\eqref{eq: pgfM_i}
	in the first line and~\eqref{eq: pgfTM_i} with $x=1$ in the second,
	\begin{equation*}
		\begin{split}
			\VEC{s}(i)&= f^{(i)}(\VEC{s})
			=\varphi^{(i)}(1,\VEC{s})
			= g_i\big(\varphi^{(i)}(1,\VEC{s}),\overline{\VEC{s}}^{\,i}\big)
			= g^{(i)}(\VEC{s})
			= \sum_{\VEC{v} \in \N_0^d} \VEC{s}^{\VEC{v}}\,\boldsymbol{\mu}_i(\VEC{v})\\
			&= \sum_{k \in \N_0}\mu^+(k)\sum_{\VEC{v}\in\N_0^d:\, |\VEC{v}|=k} \binom{k}{\VEC{v}(1),\dots,\VEC{v}(d)} [(1-r)\VEC{s}(i)]^{\VEC{v}(i)} \prod_{j \neq i}\big(\VEC{s}(j)r \VEC{P}_{ij}\big)^{\VEC{v}(j)}\\
            &= \sum_{k \in \N_0}\mu^+(k){\overline p_i}^{k}\sum_{\VEC{v}\in\N_0^d:\, |\VEC{v}|=k} \binom{k}{\VEC{v}(1),\dots,\VEC{v}(d)} \left[\frac{(1-r)\VEC{s}(i)}{\overline p_i}\right]^{\VEC{v}(i)} \prod_{j \neq i}\left[\frac{\VEC{s}(j)r \VEC{P}_{ij}}{\overline p_i}\right]^{\VEC{v}(j)}\\
			&= \sum_{k \in \N_0}\mu^+(k)\,\overline{p}_i^{\,k}
			= G(\overline p_i),
		\end{split}
	\end{equation*}
	where we used that $\varphi^{(i)}(1,\VEC{s})=f^{(i)}(\VEC{s})=\VEC{s}(i)$ in the third
	equality, and where
	\begin{equation}\label{eq: renorm}
		\overline{p}_i\coloneqq (1-r)\VEC{s}(i) + r\sum_{j \neq i} \VEC{P}_{ij}\,\VEC{s}(j)
	\end{equation}
	is the normalizing constant ensuring that the inner sum is the probability mass
	function of a multinomial distribution. Thus any fixed point of $\VEC{f}$ satisfies
	\begin{equation}\label{eq: qip}
		\VEC{s}(i)= G(\overline p_i), \qquad \forall\, i\in [d].
	\end{equation}

	Moreover, set $a\coloneqq\max_i \VEC{s}(i)$
	and $b\coloneqq\min_i \VEC{s}(i)$. By~\eqref{eq: renorm}, $\overline p_i$ is a convex combination of coordinates of $\VEC{s}$, so that $b\le \overline p_i \le a$ for every $i$. Since $G$ is non-decreasing, evaluating \eqref{eq: qip} at an index attaining the maximum and at one attaining the minimum
	gives
	\[
		a= G(\overline p_{max}) \le  G(a) \qquad\text{and}\qquad G(b)\le G(\overline p_{min}) = b.
	\]
	We know that the function $G$ is convex with $G(1)=1$, and $q$ is its smallest fixed point; hence
	$G(s)>s$ for $s\in[0,q)$ and $G(s)<s$ for $s\in(q,1)$.
	The inequality $G(b)\le b $ therefore forces $q\le b $, and the inequality $a \le G(b)$ forces $a\le b$. In both cases $q\le b\le a\le q$, that is, if $\VEC{s}$ is a fixed point, has to be $\VEC{q}=q\VEC{1}$.

	Conversely, $q\VEC{1}$ is indeed a fixed point of $\VEC{f}$: substituting
	$\VEC{s}=q\VEC{1}$ in~\eqref{eq: renorm} gives $\overline p_i=q$, and then
	\eqref{eq: qip} reads $q=G(q)$, which holds by definition of $q$.

	To prove item~(ii) it is enough to note that, by Lemma~\ref{prop: mutbackfor},
	\[
		h(q) = f^{(i)}(q\VEC{1})=q \qquad \text{ for all } i\in [d],
	\]
	so $q$ solves $h(s)=s$. Minimality of $q$ among the solutions of $h(s)=s$ follows
	from part~(i) and Lemma~\ref{lem: eqMand|VECM|}, since $h$ is the offspring
	generating function of the BGW process $(|\VEC{M}_k|, k \in \N_0 )$.
\end{proof}

\begin{remark}\label{re:Hcharacterisation}
	Proposition~\ref{prop: q} shows that hypothesis~\eqref{hyp:timext} amounts to $q>0$,
	which in turn holds if and only if $\mu^+(0)>0$. Likewise, \eqref{hyp:timextsubc}
	holds if and only if $m\le 1$, and~\eqref{hyp:timextsupc} if and only if $m>1$ and
	$\mu^+(0)>0$.
\end{remark}

\subsection{The $Q$-process of the mutant chain}

Under~\eqref{hyp:timext}, the law of the $Q$-process $(\VEC{M}_k^\uparrow, k\in \N_0)$
is defined by
\begin{equation}\label{eq: defQ}
	\P_{\VEC{y}_0}( \VEC{M}_{k_1}^\uparrow= \VEC{y}_1, \dots, \VEC{M}_{k_\ell}^\uparrow= \VEC{y}_\ell )
	\coloneqq
	\lim_{n \rightarrow \infty} \P_{\VEC{y}_0}( \VEC{M}_{k_1}= \VEC{y}_1, \dots, \VEC{M}_{k_\ell}= \VEC{y}_\ell \mid k_\ell + n < \tau_{\VEC{M}} <\infty ),
\end{equation}
for all $\VEC{y}_0\in S^\ast$, $k_1< \dots< k_\ell\in\N$, and
$\VEC{y}_1, \dots, \VEC{y}_\ell\in S^\ast$.
This process is sometimes referred to as the process conditioned on very late
extinction, or the process conditioned on extinction in the distant future. Under
hypothesis~\eqref{hyp:timextsubc} extinction is almost sure, so that the conditioning
event in~\eqref{eq: defQ} reduces to $\{\tau_{\VEC{M}}>k_\ell+n\}$.

A single object controls both regimes. Consider the generating function
$\overline{\VEC{f}} \coloneqq (\overline f^{(1)}, \dots, \overline f^{(d)})$, defined coordinatewise by
\begin{equation}\label{eq barf}
	\overline{f}^{(i)}(\VEC{y})\coloneqq
	\frac{f^{(i)}( \VEC{q}\VEC{y} )}{ f^{(i)}( \VEC{q})}= \frac{ f^{(i)}( q\VEC{y} )}{q}, \qquad i\in[d],
\end{equation}
where the last equality uses Proposition~\ref{prop: q}(i) and $q>0$.
The function $\overline{\VEC{f}}$ is the generating function of the process
$(\VEC{M}_k, k \in \N_0)$ conditioned on extinction, which we denote by
$(\overline{\VEC{M}}_k, k \in \N_0)$; it is a (sub)critical $d$-type BGW process. Under
hypothesis~\eqref{hyp:timextsubc} we have $q=1$ and $\overline{\VEC{f}}=\VEC{f}$.

\begin{proposition}\label{prop:barA}
	Assume~\eqref{hyp:timext} and let $\overline{\VEC{A}}$ be the mean matrix of
	$(\overline{\VEC{M}}_k,\, k \in \N_0)$. Then
	\[
		\overline{\VEC{A}}_{ii}= 0,
		\qquad \text{and} \qquad
		\overline{\VEC{A}}_{ij}= \E_{i}\big[ q^{ |\VEC{M}_1|-1} \VEC{M}_1(j) \big], \qquad i \neq j.
	\]
	Moreover, $\overline{\VEC{A}}=h'(q)\VEC{P}$, its maximal eigenvalue is
	\begin{equation}\label{eq:barrho}
		\overline{\rho}\coloneqq\E_{i}\big[q^{|\VEC{M}_1|-1}|\VEC{M}_1|\big] =h'(q) =\sum_{j\neq i}\frac{\partial f^{(i)}(\VEC{y})}{\partial\VEC{y}(j)}\bigg|_{\VEC{y}=q\VEC{1}},
	\end{equation}
	and the associated right and left eigenvectors are $\VEC{1}$ and $\VEC{u}$,
	respectively. Finally $\overline\rho\le 1$, with equality if and only if $\rho=1$.
\end{proposition}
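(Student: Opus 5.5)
The plan is to compute the mean matrix of the conditioned process directly from its generating function $\overline{\VEC{f}}$ in \eqref{eq barf}, and then to reduce every claim to a statement about the single-type p.g.f. $h$ via Lemma~\ref{prop: mutbackfor}.

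\emph{Entries of $\overline{\VEC{A}}$.} By definition, $\overline{\VEC{A}}_{ij}=\partial_{\VEC{y}(j)}\overline f^{(i)}(\VEC{y})|_{\VEC{y}=\VEC{1}}$. Using $\overline f^{(i)}(\VEC{y})=q^{-1}f^{(i)}(q\VEC{y})$, we get
\[
\overline{\VEC{A}}_{ij}=\partial_{\VEC{y}(j)} f^{(i)}(\VEC{y})\big|_{\VEC{y}=q\VEC{1}}=\E_i\big[\VEC{M}_1(j)\,q^{|\VEC{M}_1|-1}\big],
\]
where differentiation under the expectation is justified by monotone convergence, since all terms are nonnegative. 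Under $\P_i$ the mutants of the first allelic generation never have type $i$, so $\VEC{M}_1(i)=0$ and hence $\overline{\VEC{A}}_{ii}=0$.

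\emph{The identity $\overline{\VEC{A}}=h'(q)\VEC{P}$.} Lemma~\ref{prop: mutbackfor} gives $f^{(i)}(\VEC{y})=h([\VEC{P}\VEC{y}]_i)$. The chain rule then yields $\partial_{\VEC{y}(j)}f^{(i)}(\VEC{y})=h'([\VEC{P}\VEC{y}]_i)\VEC{P}_{ij}$. At $\VEC{y}=q\VEC{1}$ we have $[\VEC{P}q\VEC{1}]_i=q$ by stochasticity of $\VEC{P}$, so $\overline{\VEC{A}}_{ij}=h'(q)\VEC{P}_{ij}$. Since $\VEC{P}$ is irreducible and aperiodic with Perron root $1$ and eigenvectors $\VEC{1}$, $\VEC{u}$, the nonnegative multiple $h'(q)\VEC{P}$ has maximal eigenvalue $h'(q)$ with the same eigenvectors. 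Summing the identity over $j$ gives the expressions in \eqref{eq:barrho}: $\sum_j\overline{\VEC{A}}_{ij}=\E_i[q^{|\VEC{M}_1|-1}|\VEC{M}_1|]=h'(q)$. This uses $h(s)=\E_i[s^{|\VEC{M}_1|}]$, the fact that the $j=i$ term vanishes, and the fact that $\sum_j\VEC{P}_{ij}=1$. One degenerate point needs a remark: if $h'(q)=0$, the claim about the maximal eigenvalue still holds trivially. This cannot actually occur, because $\mu^+(0)+\mu^+(1)<1$.

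\emph{The inequality $\overline\rho\le1$.} This is the only step that needs real care. By Proposition~\ref{prop: q}(ii), $q$ is the smallest fixed point of the convex p.g.f. $h$ on $[0,1]$.

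In the case $\rho\le1$ we have $q=1$, so $h'(q)=h'(1)=\rho\le1$, with equality exactly when $\rho=1$.

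In the case $\rho>1$ we have $q<1$. Here $h$ is not affine, since $h'(1)=\rho>1$ while $h(0)=q'>0$ lies on the curve above the chord. Strict convexity between the fixed points $q$ and $1$ then gives
\[
h'(q)<\frac{h(1)-h(q)}{1-q}=1.
\]
We need $h$ non-affine on $[q,1]$. If $h$ were affine there, convexity together with $h(q)=q$ and $h(1)=1$ would force $h(s)=s$ on $[q,1]$, giving $h'(1)=1$ and contradicting $\rho>1$.

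Combining the two cases, $\overline\rho\le1$, with equality if and only if $\rho=1$.
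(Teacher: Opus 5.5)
Your proof is correct and follows essentially the paper's route. You differentiate $\overline f^{(i)}(\VEC{y})=f^{(i)}(q\VEC{y})/q$ at $\VEC{1}$ and factor out $\VEC{P}_{ij}$ via the multinomial colouring; you use its generating-function form (Lemma~\ref{prop: mutbackfor}) with the chain rule, where the paper conditions on $|\VEC{M}_1|$ and invokes Lemma~\ref{lem: eqMand|VECM|}. You then conclude by convexity of $h$ between the fixed points $q$ and $1$, and your non-affineness argument on $[q,1]$ spells out the strictness the paper leaves implicit; the garbled phrase ``$h(0)=q'>0$'' (undefined $q'$) can simply be deleted, since your next sentence already supplies that argument.
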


\begin{proof}
	Take $i \in [d]$. By definition of the $d$-type mutant process,
	$\overline{\VEC{A}}_{ii}= \E_{i}[\overline{\VEC{M}}_1(i)]= 0$ for all $i\in [d]$. For $j\neq i$,
	using the generating function $\overline{\VEC{f}}$ defined in~\eqref{eq barf},
	\begin{align*}
		\overline{\VEC{A}}_{ij} & = \E_{i}\big[\overline{\VEC{M}}_1(j) \big]
		= \frac{\partial }{\partial \VEC{s}(j)} \overline{f}^{(i)}(\VEC{s}) \bigg|_{\VEC{s}=\VEC{1}}
		= \frac{\partial }{\partial \VEC{s}(j)} \frac{1}{q} f^{(i)}(q\VEC{s}) \bigg|_{\VEC{s}=\VEC{1}} \\
		& = \frac{1}{q}
		\E_{i}\left[\VEC{M}_1(j) q^{|\VEC{M}_1|} \VEC{s}(j)^{\VEC{M}_1(j)-1} \prod_{k\neq i,j}\VEC{s}(k)^{\VEC{M}_1(k)} \right] \Bigg|_{\VEC{s}=\VEC{1}}
		= \E_{i}\big[ q^{ |\VEC{M}_1|-1} \VEC{M}_1(j)\big].
	\end{align*}
	Taking conditional expectation with respect to $|\VEC{M}_1|$ and using
	Lemma~\ref{lem: eqMand|VECM|} yields
	\[
		\overline{\VEC{A}}_{ij}= \VEC{P}_{ij}\,\E_{i}\big[q^{|\VEC{M}_1|-1}|\VEC{M}_1|\big]=\VEC{P}_{ij}\,h'(q) \qquad \Longleftrightarrow \qquad \overline{\VEC{A}} = h'(q) \VEC{P}
	\]
	where $h$, defined in~\eqref{eq: pgf |m|}, is the generating function of the
	mutant-count process. It follows that the maximal eigenvalue of 	
    $\overline{\VEC{A}}$ is $h'(q)$, with associated right eigenvector $\VEC{1}$ and left eigenvector $\VEC{u}$.
	Finally, $h$ is convex on $[0,1]$ with $h(q)=q$ and $h(1)=1$. If $q<1$ this forces
	$h'(q)<1$, while if $q=1$ then $h'(1)=\rho\le 1$ by
	Proposition~\ref{prop: meanmxrho}, with equality exactly in the critical case.
\end{proof}

We can now state the law of the $Q$-process of the mutant chain in a form that covers
both regimes. Its proof is deferred to Appendix~\ref{app:proof4}; it is a particular
case of~\cite[Equation 2.4]{penisson2014} and is based on the ideas of
~\cite[Section~3]{nakagawa1978}, whose paper is not easily accessible.

\begin{theorem}\label{thm: Qprocess}
	Assume~\eqref{hyp:timext} and let $\VEC{y}_0\in S^\ast$. The limit
	in~\eqref{eq: defQ} exists and defines the law of the $Q$-process
	$(\VEC{M}^\uparrow_{k}, k\in \N_0)$, given by
	\begin{equation} \label{eq:Qlim}
		\P_{\VEC{y}_0}( \VEC{M}^\uparrow_{k_1}= \VEC{y}_1, \dots, \VEC{M}^\uparrow_{k_\ell}= \VEC{y}_\ell )
		= \P_{\VEC{y}_0}( \VEC{M}_{k_1}= \VEC{y}_1, \dots, \VEC{M}_{k_\ell}= \VEC{y}_\ell ) \frac{q^{|\VEC{y}_\ell|-|\VEC{y}_0|}}{\big(h^{\prime}(q)\big)^{k_\ell}} \frac{|\VEC{y}_\ell |}{|\VEC{y}_0|},
	\end{equation}
	for any $k_1<\dots<k_\ell$ and $\VEC{y}_1,\dots, \VEC{y}_\ell \in S^\ast$,
	where $q$ is the extinction probability of $( \VEC{M}_k,\, k \in\N_0 )$ and $h$ is the
	generating function of $| \VEC{M}_1 |$.
	Moreover, $(\VEC{M}^\uparrow_{k}, k\in \N_0)$ is a Markov chain with transition
	probabilities
	\begin{equation}\label{eq: Qtransitions}
		Q_{(\VEC{x}, \VEC{y})} = \frac{ q^{|\VEC{y}|-|\VEC{x}|}}{h'(q)} \frac{ | \VEC{y} | }{ | \VEC{x} |}P_{(\VEC{x}, \VEC{y})}, \qquad \VEC{x},\VEC{y} \in S^\ast,
	\end{equation}
	where $P_{(\VEC{x},\VEC{y})}$ are the transition probabilities from $\VEC{x}$ to
	$\VEC{y}$ of the process $( \VEC{M}_k,\, k \in\N_0 )$.

	In particular, under~\eqref{hyp:timextsubc} one has $q=1$ and $h'(q)=\rho$, so
	that~\eqref{eq:Qlim} reads
	\begin{equation} \label{eq:Qlimsub}
		\P_{\VEC{y}_0}( \VEC{M}^\uparrow_{k_1}= \VEC{y}_1, \dots, \VEC{M}^\uparrow_{k_\ell}= \VEC{y}_\ell )
		= \P_{\VEC{y}_0}( \VEC{M}_{k_1}= \VEC{y}_1, \dots, \VEC{M}_{k_\ell}= \VEC{y}_\ell ) \left(\frac{1-(1-r)m}{rm}\right)^{k_\ell} \frac{|\VEC{y}_\ell |}{|\VEC{y}_0|}.
	\end{equation}
\end{theorem}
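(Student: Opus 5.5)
The plan is to compute the conditional probabilities in~\eqref{eq: defQ} directly, using the Markov property of $(\VEC{M}_k)$ at time $k_\ell$. Write $A=\{\VEC{M}_{k_1}=\VEC{y}_1,\dots,\VEC{M}_{k_\ell}=\VEC{y}_\ell\}$. By the Markov property and the branching property,
\[
\P_{\VEC{y}_0}\big(A,\ k_\ell+n<\tau_{\VEC{M}}<\infty\big)=\P_{\VEC{y}_0}(A)\,\P_{\VEC{y}_\ell}(n<\tau_{\VEC{M}}<\infty).
\]
Moreover, $\P_{\VEC{x}}(n<\tau_{\VEC{M}}<\infty)=\P_{\VEC{x}}(\tau_{\VEC{M}}<\infty)-\P_{\VEC{x}}(\tau_{\VEC{M}}\le n)=\VEC{q}^{\VEC{x}}-\VEC{f}_n^{\VEC{x}}(\VEC{0})$. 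By Proposition~\ref{prop: q}, $\VEC{q}^{\VEC{x}}=q^{|\VEC{x}|}$, so the whole problem reduces to the asymptotics of $q^{|\VEC{x}|}-\VEC{f}_n^{\VEC{x}}(\VEC{0})$ as $n\to\infty$.

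The key simplification is that we never need multitype asymptotics. Since $\VEC{P}$ is stochastic, Lemma~\ref{prop: mutbackfor} gives $\VEC{f}(s\VEC{1})=h(s)\VEC{1}$. By induction, $\VEC{f}_n(\VEC{0})=h_n(0)\VEC{1}$, and hence $\VEC{f}_n^{\VEC{x}}(\VEC{0})=h_n(0)^{|\VEC{x}|}$. This is just the statement that extinction depends only on the mutant-count chain (Lemma~\ref{lem: eqMand|VECM|}). The quantity to analyse is therefore
\[
q^{|\VEC{x}|}-h_n(0)^{|\VEC{x}|}=\big(q-h_n(0)\big)\sum_{j=0}^{|\VEC{x}|-1}q^{j}h_n(0)^{|\VEC{x}|-1-j}\sim |\VEC{x}|\,q^{|\VEC{x}|-1}\big(q-h_n(0)\big),
\]
using $h_n(0)\uparrow q$.

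Next I need the ratio limit $\lim_n (q-h_{n}(0))/(q-h_{n+k}(0))=h'(q)^{-k}$, uniformly enough to take a quotient. I would get it from the one-step identity
\[
q-h_{n+1}(0)=h(q)-h(h_n(0))=h'(\theta_n)\big(q-h_n(0)\big),\qquad \theta_n\in(h_n(0),q).
\]
This yields $(q-h_{n+1}(0))/(q-h_n(0))\to h'(q)$ by continuity of $h'$ on $[0,q]$ ($h'(q)$ is finite: it is at most $\rho<\infty$ under~\eqref{hyp:size}). It also needs $h'(q)>0$, which holds because $h$ is non-degenerate ($\zeta^{(m)}\not\equiv0$ and $q>0$). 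Iterating $k$ times gives the claim. No Kesten-type $x\log x$ condition is needed, because only ratios appear. The case $h'(q)=1$ (critical, $q=1$) is harmless for the same reason.

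Finally, divide:
\[
\frac{\P_{\VEC{y}_0}(A)\big(q^{|\VEC{y}_\ell|}-h_n(0)^{|\VEC{y}_\ell|}\big)}{q^{|\VEC{y}_0|}-h_{n+k_\ell}(0)^{|\VEC{y}_0|}}\longrightarrow \P_{\VEC{y}_0}(A)\,\frac{|\VEC{y}_\ell|\,q^{|\VEC{y}_\ell|-1}}{|\VEC{y}_0|\,q^{|\VEC{y}_0|-1}}\,h'(q)^{-k_\ell},
\]
which is~\eqref{eq:Qlim}. Taking $\ell=1$ and $k_1=1$, and then comparing~\eqref{eq:Qlim} for consecutive times, shows that the finite-dimensional laws are those of a Markov chain with kernel~\eqref{eq: Qtransitions}; the factors telescope. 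These laws are consistent and sum to one because $\sum_{\VEC{y}}P_{(\VEC{x},\VEC{y})}|\VEC{y}|q^{|\VEC{y}|}=|\VEC{x}|q^{|\VEC{x}|}h'(q)$ (differentiate $h(s)^{|\VEC{x}|}$ at $q$). The subcritical specialisation~\eqref{eq:Qlimsub} follows from $q=1$ and $h'(1)=\rho$ (Proposition~\ref{prop: meanmxrho}). The main obstacle is the ratio-limit step; the reduction $\VEC{f}_n(\VEC{0})=h_n(0)\VEC{1}$ is what makes it a one-dimensional mean-value argument rather than a multitype Perron--Frobenius estimate.
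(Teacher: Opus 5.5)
Your proof is correct, but it takes a different route from the paper's.

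\textbf{The paper's route.} Like you, the paper (Appendix~\ref{app:proof4}) starts from $\P_{\VEC{y}}(n<\tau_{\VEC{M}}<\infty)=\VEC{q}^{\VEC{y}}-\VEC{f}_n^{\VEC{y}}(\VEC{0})$. It then treats the asymptotics as a genuinely multitype problem:
\begin{enumerate}
\item It passes to the generating function $\overline{\VEC{f}}$ of the chain conditioned on extinction.
\item It uses Proposition~\ref{prop:barA} to identify the Perron root $\overline\rho=h'(q)$ and the eigenvectors $\VEC{1},\VEC{u}$.
\item It invokes two of Nakagawa's lemmas for (sub)critical positive regular processes. One gives the one-step ratio $1/\overline\rho$. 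The other gives $\bigl(1-\overline{\VEC{f}}_n^{\VEC{z}}(\VEC{0})\bigr)\big/\bigl(\VEC{u}\cdot(\VEC{1}-\overline{\VEC{f}}_n(\VEC{0}))\bigr)\to|\VEC{z}|$.
\end{enumerate}

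\textbf{Your route.} You observe that $\VEC{f}(s\VEC{1})=h(s)\VEC{1}$ forces $\VEC{f}_n(\VEC{0})=h_n(0)\VEC{1}$. Every quantity in the conditioning is then a function of the scalar $h_n(0)$. The problem collapses to the classical single-type computation: factor $q^{N}-h_n(0)^{N}$, then apply the mean value theorem for $h$ at $q$.

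\textbf{What each buys.} Your argument is self-contained and elementary. It makes transparent why the answer depends only on $h$ and $q$, and why no $x\log x$-type condition appears. You also check, via $\sum_{\VEC{y}}P_{(\VEC{x},\VEC{y})}|\VEC{y}|q^{|\VEC{y}|}=h'(q)|\VEC{x}|q^{|\VEC{x}|}$, that the limits form consistent probability laws, which the paper leaves implicit. The paper's argument is the general multitype template and would survive with a non-trivial right eigenvector $\VEC{v}$ and a non-constant extinction vector $\VEC{q}$. In this model that generality is not needed: $\overline{\VEC{f}}_n(\VEC{0})=(h_n(0)/q)\VEC{1}$ and $\VEC{u}\cdot\VEC{1}=1$, so Nakagawa's two lemmas reduce exactly to your scalar limits.

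\textbf{Details left implicit.} You should state two small points.
\begin{itemize}
\item $h_n(0)<q$ for every $n$. This is needed for the ratios and for $\theta_n$ to make sense. It holds because $q>0$ and $\P_i(|\VEC{M}_1|\ge 1)>0$.
\item When $q=1$, $h'$ is left-continuous at $1$. This follows by monotone convergence from $h'(1)=\rho<\infty$.
\end{itemize}
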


\subsection{The mutant-count $Q$-process and the multinomial colouring}

We now consider the mutant-count chain $(|\VEC{M}^\uparrow_k|,\, k \in \N_0)$
associated with $(\VEC{M}^\uparrow_k,\, k \in \N_0)$.
By Lemma~\ref{lem: eqMand|VECM|}, it is natural to expect its law to be related to that
of $(M^\uparrow_k,\, k \in \N_0)$, the $Q$-process of the infinite-alleles mutant chain
$(M_k,\, k \in \N_0)$, which was studied in~\cite{BlancasRivero} under mild assumptions
excluding degenerate cases.

\begin{corollary}\label{cor: Conmuta}
	Assume~\eqref{hyp:timext}. The $Q$-process associated with
	$(|\VEC{M}_k|,\, k\in\N_0)$ coincides with $(|\VEC{M}_k^\uparrow|, k\in \N_0)$.
	Indeed, for each $i \in [d]$, under the measure $\P_{i}$,
	\begin{equation} \label{eq:countQequality}
		(|\VEC{M}_k^\uparrow|, k\in \N_0) \overset{\mathcal{L}}{=} (M^{\uparrow}_k, k\in \N_0).
	\end{equation}
	Consequently, $( |\VEC{M}^\uparrow_k|-1,\ k\in\N_0 )$ is a single-type BGW process
	with immigration $\big[\overline h,\ \overline h^\prime/\overline h^\prime(1) \big]$, where
	$\overline h(s)\coloneqq h(qs)/q$ is the offspring p.g.f. and the second entry is the
	immigration p.g.f. Under~\eqref{hyp:timextsubc} this reads
	$\big[h,\ h^\prime/\rho \big]$. 
\end{corollary}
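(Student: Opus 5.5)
The plan is to compute finite-dimensional distributions of $(|\VEC{M}^\uparrow_k|)$ directly from formula~\eqref{eq:Qlim} in Theorem~\ref{thm: Qprocess}, and to compare them with the analogous formula for the single-type $Q$-process of a BGW process. Fix $i\in[d]$, times $k_1<\dots<k_\ell$ and integers $n_1,\dots,n_\ell\in\N$. I will sum~\eqref{eq:Qlim} over all $\VEC{y}_1,\dots,\VEC{y}_\ell\in S^\ast$ with $|\VEC{y}_j|=n_j$. The correction factor $q^{|\VEC{y}_\ell|-1}\,|\VEC{y}_\ell|/(h'(q))^{k_\ell}$ depends on the $\VEC{y}_j$ only through $n_\ell$, so it factors out. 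This gives
\[
\P_{i}\big(|\VEC{M}^\uparrow_{k_j}|=n_j,\ j\le \ell\big)=\P_{i}\big(|\VEC{M}_{k_j}|=n_j,\ j\le\ell\big)\,\frac{q^{n_\ell-1}\,n_\ell}{(h'(q))^{k_\ell}}.
\]

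Next, I will invoke Lemma~\ref{lem: eqMand|VECM|}. It replaces the first factor by $\P_1(M_{k_j}=n_j,\ j\le\ell)$ for the single-type BGW process with offspring p.g.f. $h$ (Remark~\ref{re : h pgf M}), whose extinction probability is $q$ by Proposition~\ref{prop: q}(ii). The right-hand side is then exactly the classical single-type $Q$-process formula (the $d=1$ case of Theorem~\ref{thm: Qprocess}, or~\cite[Chapter I.D]{AthreyaNey} with the conditioning $\{n+k<\tau<\infty\}$ as in~\cite{BlancasRivero}).

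For the same reason, the limit defining the $Q$-process of $(|\VEC{M}_k|)$ under $\P_i$ is obtained by summing the prelimit probabilities in~\eqref{eq: defQ}. Indeed, the conditioning event depends only on $\tau_{\VEC{M}}$, which is a functional of $(|\VEC{M}_k|)$. Since the sums are finite, the limits can be exchanged with them. Hence the $Q$-process of the count chain coincides with $(|\VEC{M}^\uparrow_k|)$, and both equal $(M^\uparrow_k)$ in law, which gives~\eqref{eq:countQequality}.

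For the immigration structure, I will note that the transition $n\to m$ of the count $Q$-process is $\frac{q^{m-n} m}{h'(q)\,n}\,[s^m]h(s)^n$. Its p.g.f. is
\[
\sum_m s^m\frac{q^{m-n}m}{n\,h'(q)}[s^m]h^n=\frac{s\,(h^n)'(qs)\,q}{n\,h'(q)\,q^{n}}=s\,\overline h(s)^{n-1}\frac{\overline h'(s)}{\overline h'(1)},
\]
using $\overline h(s)=h(qs)/q$ and $\overline h'(s)=h'(qs)$. Thus the chain minus one evolves as $n-1$ independent $\overline h$-families plus an immigrant generation with p.g.f. $\overline h'/\overline h'(1)$; under~\eqref{hyp:timextsubc}, $q=1$ and $h'(1)=\rho$. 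The main obstacle is to justify the interchange of the limit and the (possibly infinite) sums over the $\VEC{y}$'s. I expect to handle it through the exact factorization above, which makes the sum a finite rearrangement at each $n_\ell$ level, and through Fubini for nonnegative terms.
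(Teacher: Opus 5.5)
Your proposal is correct and rests on the same core idea as the paper: sum \eqref{eq:Qlim} over the fibres of fixed total size, then apply Lemma~\ref{lem: eqMand|VECM|}. It differs in scope and in how much it proves rather than cites.

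\textbf{Comparison with the paper's route.} The paper specialises \eqref{eq:Qlim} only to one step from a single individual ($\ell=1$, $k_1=1$, $\VEC{y}_0=\VEC{e}_i$). It sums over $\{|\VEC{v}|=n\}$ to get $nq^{n-1}\P_i(|\VEC{M}_1|=n)/h'(q)$ and recognises this as the one-step law of a single-type $Q$-process via \cite[Section I.14]{AthreyaNey}. It then passes to the whole process by an appeal to ``the branching property''. The immigration structure is not computed there; it is quoted from \cite[Corollary~3.2]{BlancasRivero} and from classical theory applied to $\overline h$. An explicit p.g.f. computation appears only later, in the proof of Proposition~\ref{prop:colorChain}, and only from the state $|\VEC{M}^\uparrow_0|=2$.

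You instead sum all finite-dimensional distributions at once. This gives the full law of $(|\VEC{M}^\uparrow_k|)$ and identifies it directly with the $Q$-process of the count chain, using that the conditioning event is a functional of $(|\VEC{M}_k|)$. You then verify the immigration p.g.f. $s\,\overline h(s)^{n-1}\overline h'(s)/\overline h'(1)$ from every state $n$. Your route is self-contained and makes explicit what the paper compresses into ``the branching property''. The $Q$-process is not itself a branching process, so going from one individual's one-step law to the whole process really needs the multi-step formula \eqref{eq:Qlim}, which is exactly what you use. The paper's route is shorter only because it leans on citations.

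\textbf{Three small points.}
\begin{enumerate}
\item What you call the main obstacle is not one. For fixed $n_j\in\N$, the fibre $\{\VEC{y}\in\N_0^d : |\VEC{y}|=n_j\}$ has $\binom{n_j+d-1}{d-1}$ elements, all in $S^\ast$. Every sum you use is therefore finite, and the limit in \eqref{eq: defQ} passes through it trivially. No Fubini argument is needed; drop the hedge about ``possibly infinite'' sums, which contradicts your own earlier sentence.
\item The equality \eqref{eq:countQequality} does not need the classical single-type formula. Since $(|\VEC{M}_k|)\overset{\mathcal{L}}{=}(M_k)$ as processes and the conditioning event $\{k_\ell+n<\tau<\infty\}$ is a path functional, the prelimit conditional laws already coincide for every $n$.
\item Your transition kernel $n\to m$ presupposes that the count $Q$-process is Markov. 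State this explicitly: it follows by taking ratios of consecutive finite-dimensional distributions, using that $(|\VEC{M}_k|)$ is Markov.
\end{enumerate}
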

\begin{proof}
	For all $n \in \N$ and $i\in [d]$, taking $\ell=1$, $k_1=1$ and
	$\VEC{y}_0=\VEC{e}_i$ in~\eqref{eq:Qlim} gives
	\begin{equation}\label{eq: exp1}
		\P_{i}(\VEC{M}_1^{\uparrow}=\VEC{v})=\P_{i}(\VEC{M}_1=\VEC{v})\,\frac{q^{|\VEC{v}|-1}|\VEC{v}|}{h'(q)},
		\qquad \VEC{v}\in S^\ast,
	\end{equation}
	and therefore
	\begin{equation}\label{eq: sizebiasedgral}
		\begin{split}
			\P_{i}(|\VEC{M}_1^{\uparrow}|=n)&= \sum_{\VEC{v}:\, |\VEC{v}|=n }\P_{i}(\VEC{M}_1^{\uparrow}=\VEC{v})
			=\frac{nq^{n-1}}{h'(q)}\sum_{\VEC{v}:\, |\VEC{v}|=n }\P_{i}(\VEC{M}_1=\VEC{v})
			= \frac{nq^{n-1}}{h'(q)}\,\P_{i}(|\VEC{M}_1|=n).
		\end{split}
	\end{equation}

	According to~\cite[Section I.14]{AthreyaNey}, the right-hand side
	of~\eqref{eq: sizebiasedgral} is precisely the one-step law of the $Q$-process of a
	single-type BGW process whose offspring generating function is $h$ and whose
	extinction probability is $q$; note that it is a probability distribution because
	$\sum_n nq^{n-1}\P_i(|\VEC{M}_1|=n)=h'(q)$.
	Since $(|\VEC{M}_k|,\, k\in\N_0)$ is a single-type BGW process, the branching property
	implies that $(|\VEC{M}_k^\uparrow|, k\in \N_0)$ is the $Q$-process associated with
	$(|\VEC{M}_k|,\, k\in\N_0)$, and Lemma~\ref{lem: eqMand|VECM|}
	gives~\eqref{eq:countQequality}.
	The last assertion is~\cite[Corollary~3.2]{BlancasRivero} in the (sub)critical case.
	In general, it follows from the classical description of the $Q$-process of a
	single-type BGW process conditioned on extinction, see~\cite[Section I.14]{AthreyaNey}
	and~\cite{LyonsPemantlePeres}, applied to the subcritical offspring generating
	function $\overline h$, for which $\overline h'(1)=h'(q)=\overline\rho<1$ when $q<1$.
\end{proof}

\begin{proposition} \label{prop:colorChain}
	Assume~\eqref{hyp:timext}. Conditionally on the total offspring size, the offspring
	vector of an individual of type $i$ in the $Q$-process is multinomially distributed:
	under $\P_i$,
	\begin{equation}\label{eq:condtomcpmatixsub}
		\VEC{M}_1^\uparrow\big|_{\{\VEC{M}^\uparrow_0=\VEC{e}_i,\,|\VEC{M}_1^\uparrow|=n\}}\sim \mathcal{M}(n,\VEC{P}_{i\cdot}).
	\end{equation}
	Moreover, $(\VEC{M}^\uparrow_k,\, k \in \N_0)$ is a Markov chain with transition
	probabilities
	\begin{multline*}
			\P_{\VEC{y}_0} \big( \VEC{M}^\uparrow_{k+1}= \VEC{y}_{k+1} \Big|\VEC{M}^\uparrow_{k}= \VEC{y}_k \big) = \\\frac{q^{|\VEC{y}_{k+1}|-|\VEC{y}_k|}}{h'(q)} \frac{|\VEC{y}_{k+1} |}{|\VEC{y}_k|}\,
			\P_{\VEC{y}_k} \big( |\VEC{M}_{1}|= |\VEC{y}_{k+1}|\big)
			\P_{\VEC{y}_k} \big( \VEC{M}_{1}= \VEC{y}_{k+1} \Big| |\VEC{M}_{1}|= |\VEC{y}_{k+1}| \big).
	\end{multline*}
	In other words, the transition probabilities of the $Q$-process
	$(\VEC{M}^\uparrow_k,\, k \in \N_0)$ describe a two-step procedure:
	first, we sample the total offspring count $|\VEC{M}^\uparrow_{k+1}|$ from the BGW
	process with immigration of Corollary~\ref{cor: Conmuta}; second, the offspring types
	are assigned independently and multinomially, using the row of $\VEC{P}$ indexed by
	the parent's type.
\end{proposition}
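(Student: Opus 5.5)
The plan is to derive everything from the explicit one-step law of Theorem~\ref{thm: Qprocess}, combined with the multinomial structure of Lemma~\ref{lem: eqMand|VECM|}. The key observation is that the Doob-type weight in \eqref{eq: Qtransitions} depends on the target state $\VEC{y}$ only through $|\VEC{y}|$. Conditioning on $|\VEC{y}|$ therefore cancels the weight, and the conditional type distribution is inherited unchanged from the original chain.

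First, for \eqref{eq:condtomcpmatixsub}, I fix $i\in[d]$, $n\in\N$ and $\VEC{v}$ with $|\VEC{v}|=n$. By \eqref{eq: exp1},
\[
\P_i(\VEC{M}_1^\uparrow=\VEC{v})=\P_i(\VEC{M}_1=\VEC{v})\,\frac{nq^{n-1}}{h'(q)}.
\]
Dividing by \eqref{eq: sizebiasedgral}, the factor $nq^{n-1}/h'(q)$ cancels. This gives $\P_i(\VEC{M}_1^\uparrow=\VEC{v}\mid |\VEC{M}_1^\uparrow|=n)=\P_i(\VEC{M}_1=\VEC{v}\mid |\VEC{M}_1|=n)$, which is $\mathcal{M}(n,\VEC{P}_{i\cdot})$ by \eqref{eq:condtomcpmatix}. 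Here $n\ge1$, since $\VEC{v}\in S^\ast$; the conditioning event has positive probability whenever $\P_i(|\VEC{M}_1|=n)>0$, and $q>0$ under \eqref{hyp:timext}.

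Second, for the transition formula, the Markov property and the transition kernel are already given by \eqref{eq: Qtransitions} in Theorem~\ref{thm: Qprocess}. I will take $Q_{(\VEC{y}_k,\VEC{y}_{k+1})}$ and factor
\[
P_{(\VEC{y}_k,\VEC{y}_{k+1})}=\P_{\VEC{y}_k}(\VEC{M}_1=\VEC{y}_{k+1})=\P_{\VEC{y}_k}(|\VEC{M}_1|=|\VEC{y}_{k+1}|)\,\P_{\VEC{y}_k}(\VEC{M}_1=\VEC{y}_{k+1}\mid |\VEC{M}_1|=|\VEC{y}_{k+1}|),
\]
with the convention that both sides vanish when the conditioning event is null. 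Inserting this factorisation into \eqref{eq: Qtransitions} yields the displayed formula. Time-homogeneity lets me replace $\P_{\VEC{y}_0}(\cdot\mid\VEC{M}^\uparrow_k=\VEC{y}_k)$ by the one-step kernel.

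Third, I will justify the two-step interpretation. Summing $Q$ over $\{\VEC{y}:|\VEC{y}|=n\}$ gives $\frac{q^{n-|\VEC{x}|}}{h'(q)}\frac{n}{|\VEC{x}|}\P_{\VEC{x}}(|\VEC{M}_1|=n)$. By Lemma~\ref{lem: eqMand|VECM|} and the branching property, $|\VEC{M}_1|$ under $\P_{\VEC{x}}$ has the law of a BGW process started from $|\VEC{x}|$ individuals. So this sum is the $Q$-process transition of the single-type chain with generating function $h$ (\cite[Section I.14]{AthreyaNey}), which by Corollary~\ref{cor: Conmuta} is the BGW process with immigration $[\overline h,\overline h'/\overline h'(1)]$.

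The colouring step is where I expect the main obstacle. For a general parent vector $\VEC{x}$, the conditional law of $\VEC{M}_1$ given $|\VEC{M}_1|=n$ is not a single multinomial. It is obtained by conditioning independent parents on the total, each parent's offspring being multinomially coloured by its own row of $\VEC{P}$ given its own count. I will argue this via the branching property: $\VEC{M}_1$ is a sum of independent $\VEC{M}_1^{(p)}$ over the parents $p$, and conditionally on the individual counts $(|\VEC{M}_1^{(p)}|)_p$, the colourings are independent $\mathcal{M}(|\VEC{M}_1^{(p)}|,\VEC{P}_{i_p\cdot})$ by \eqref{eq:condtomcpmatix}. The same cancellation as in the first step shows this conditional structure survives under $Q$. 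That is precisely the statement that types are assigned independently using each parent's row of $\VEC{P}$.
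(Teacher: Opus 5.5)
Your proposal is correct, but it puts the work in a different place from the paper's proof.

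The paper's argument concentrates on the mutant count. Starting from $\ell\VEC{e}_i$, it sums \eqref{eq:Qlim} over $\{|\VEC{v}|=n\}$ to recognise the single-type $Q$-process transition for $h$. It then checks the immigration structure directly, computing $\E_{2\VEC{e}_i}\big[s^{|\VEC{M}_1^\uparrow|-1}\big]$ by differentiating $h(qs)^2$ to obtain $\overline h(s)\,\overline h'(s)/\overline h'(1)$. The multinomial statement \eqref{eq:condtomcpmatixsub} and the factorised kernel are left as immediate consequences of \eqref{eq: exp1}, \eqref{eq: Qtransitions} and \eqref{eq:condtomcpmatix}.

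You do the reverse. You make explicit the cancellation the paper leaves implicit: the weight $\frac{q^{|\VEC{y}|-|\VEC{x}|}}{h'(q)}\frac{|\VEC{y}|}{|\VEC{x}|}$ depends on $\VEC{y}$ only through $|\VEC{y}|$, so conditioning on the total returns the original conditional type law. You then take the count dynamics from Corollary~\ref{cor: Conmuta} rather than re-verifying the immigration generating function.

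What your route buys is a correct treatment of arbitrary parent vectors $\VEC{x}$. For parents of mixed types, the conditional law of $\VEC{M}_1$ given $|\VEC{M}_1|=n$ is not a single multinomial. It is a mixture over the split of $n$ among the parents: i.i.d.\ $h$-distributed counts conditioned on their sum, each share coloured by its own parent's row. This is the only sense in which the two-step description can hold, and the paper's wording glosses over it.

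To make that step airtight, phrase it as a construction. Put the weight, viewed as a function of $\sum_p|\VEC{M}_1^{(p)}|$, on the product law of the per-parent offspring vectors. Its image under summation is exactly $Q_{(\VEC{x},\cdot)}$, and conditionally on $(|\VEC{M}_1^{(p)}|)_p$ the colourings are still independent $\mathcal{M}(|\VEC{M}_1^{(p)}|,\VEC{P}_{i_p\cdot})$.

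The paper's route, in turn, gives an explicit check of the immigration p.g.f.\ in the supercritical case, where Corollary~\ref{cor: Conmuta} itself only appeals to classical references.
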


\begin{proof}
	For all $n \in \N$ and $i\in [d]$, taking $\ell=1$, $k_1=1$ and
	$\VEC{y}_0=\ell\VEC{e}_i$ in~\eqref{eq:Qlim} gives
	\begin{equation*}
		\P_{\ell\VEC{e}_i}(\VEC{M}_1^{\uparrow}=\VEC{v})=\P_{\ell\VEC{e}_i}(\VEC{M}_1=\VEC{v})\,\frac{q^{|\VEC{v}|-\ell}}{h'(q)}\frac{|\VEC{v}|}{\ell},
		\qquad \VEC{v}\in S^\ast,
	\end{equation*}
	and therefore
	\begin{equation}\label{eq: sizebiasedgral1}
		\begin{split}
			\P_{\ell\VEC{e}_i}(|\VEC{M}_1^{\uparrow}|=n)&= \sum_{\VEC{v}:\, |\VEC{v}|=n }\P_{\ell\VEC{e}_i}(\VEC{M}_1^{\uparrow}=\VEC{v})
			=\frac{q^{n-\ell}}{h'(q)}\frac{n}{\ell}\sum_{\VEC{v}:\, |\VEC{v}|=n }\P_{\ell\VEC{e}_i}(\VEC{M}_1=\VEC{v})\\
			&= \frac{q^{n-\ell}}{h'(q)}\frac{n}{\ell}\sum_{\VEC{v}:\, |\VEC{v}|=n }\P_{\ell}(|\VEC{M}_1|=n).
		\end{split}
	\end{equation}
According to~\cite[Section I.14]{AthreyaNey}, the right-hand side
	of~\eqref{eq: sizebiasedgral1} is the one-step law of the $Q$-process of a
	single-type BGW process whose offspring generating function is $h$ and whose
	extinction probability is $q$. Note that it is a probability distribution because
	$\sum_n nq^{n-1}\P_i(|\VEC{M}_1|=n)=h'(q)$.
	Since $(|\VEC{M}_k|,\, k\in\N_0)$ is a single-type BGW process, the branching property
	implies that $(|\VEC{M}_k^\uparrow|, k\in \N_0)$ is the $Q$-process associated with
	$(|\VEC{M}_k|,\, k\in\N_0)$, and Lemma~\ref{lem: eqMand|VECM|}
	gives~\eqref{eq:countQequality}.

	The last assertion is~\cite[Corollary~3.2]{BlancasRivero} in the (sub)critical case;
	in general, it follows from the classical description of the $Q$-process of a
	single-type BGW process conditioned on extinction, see~\cite[Section I.14]{AthreyaNey}. Indeed, by \cite{kawazu1971branching}, we know that a BGW process with immigration corresponding to
$[f(s), g(s)]$ is a time-discrete Markov chain $(Z_k)$ with p.g.f. given by 
\begin{equation*}
    \E_1\big[s^{Z_1}\big]= \E\big[s^{X_1 + Y_{1}}\big]=f(s)g(s),
\end{equation*}
where $X_1$ corresponds to the offspring of the original individual and $Y_1$ is the immigration in the first generation, so $f(\cdot)$ is the offspring p.g.f. and $g(\cdot)$ is the immigration p.g.f.
In particular, if we take $Z_0\coloneqq|\VEC{M}^\uparrow_0|-1=1$, this implies that $|\VEC{M}^\uparrow_0|=2$, and we can set $\VEC{M}^\uparrow_0=2\VEC{e}_i$ without loss of generality, so by \eqref{eq: sizebiasedgral} we have
\begin{equation*}
\begin{split}
    \E_1\big[s^{Z_1}\big]&=
    \E_{2\VEC{e}_i}\big[s^{|\VEC{M}^\uparrow_1|-1}\big]=
    \sum\limits_{n=1}^{\infty}s^{n-1}\frac{q^{n-2}}{h'(q)}\frac{n}{2}\,\P_{2\VEC{e}_i}(|\VEC{M}_1|=n)= 
    \frac{1}{2qh'(q)}\sum\limits_{n=1}^{\infty}n(qs)^{n-1}\P_{2\VEC{e}_i}(|\VEC{M}_1|=n)\\
    &=\frac{1}{2qh'(q)}2h(qs)h'(qs) = \frac{h(qs)}{q}\frac{h'(qs)}{h'(q)}= \frac{\overline{h}(s)}{q}\frac{\overline{h}'(s)}{\overline{h}'(1)},
\end{split}
\end{equation*}
where to obtain the fourth equality, note that, as $(|\VEC{M}_k|)$ is a BGW process, we have 
\begin{equation*}
\sum\limits_{n=0}^{\infty}(qs)^{n}\P_{2\VEC{e}_i}(|\VEC{M}_1|=n) =h^2(qs),
\end{equation*}
and we differentiate both sides with respect to $s$. 
\end{proof}
\section{The $Q$-process associated with the clone-mutant chain}
\label{sec:clone_mutant_nonExtinction}

Thanks to the results established in the previous section, we are now in a position to
define the clone-mutant chain $\big((\VEC{T}_k,\VEC{M}_{k+1}),\, k \in \N_0 \big)$
conditioned on not being absorbed in the distant future. As in
Theorem~\ref{thm: Qprocess}, a single statement covers both regimes; under
\eqref{hyp:timextsubc} it should be read with $q=1$ and
$h'(q)=\rho=\tfrac{rm}{1-(1-r)m}$.

Thanks to Remark \ref{re: sameextime}, it is clear that the conditional law of the process $\bigl( (\VEC{T}_\ell, \VEC{M}_{\ell+1}) : \ell \in \{0, 1, \ldots, k-1\} \bigr)$ under  $\P_{\VEC{y}_0}\!\big( \,\cdot\, \mid k+n < \tau_\VEC{M} < \infty \big)$ is the same as that the process under \\$\P_{\VEC{y}_0}\!\big( \,\cdot\, \mid k+n < \tau_{(\VEC{T},\VEC{M})} < \infty \big)$, and therefore we have the following result.

\begin{theorem}\label{thm:condnonext_clonemut}
	Assume hypothesis~\eqref{hyp:timext}.
	Let $\VEC{y}_0 \in S^\ast$ and fix $k \in \N$. Then the conditional law of the
	process
	\[
		\bigl( (\VEC{T}_\ell, \VEC{M}_{\ell+1}) : \ell \in \{0, 1, \ldots, k-1\} \bigr)
		\text{ under } \P_{\VEC{y}_0}\!\big( \,\cdot\, \mid k+n < \tau_\VEC{M} < \infty \big)
	\]
	converges as $n \to \infty$ to $\P^{\uparrow}_{\VEC{y}_0}$, 
	given by
	\begin{align*}
		\P^{\uparrow}_{\VEC{y}_0}\bigl(&\VEC{T}_0 = \VEC{x}_0, \VEC{M}_1 = \VEC{y}_1, \ldots, \VEC{T}_{k-1} = \VEC{x}_{k-1}, \VEC{M}_k = \VEC{y}_k \bigr) \\
		&= \P_{\VEC{y}_0}\bigl(\VEC{T}_0 = \VEC{x}_0, \VEC{M}_1 = \VEC{y}_1, \ldots, \VEC{T}_{k-1} = \VEC{x}_{k-1}, \VEC{M}_k = \VEC{y}_k\bigr)\,
		\frac{q^{|\VEC{y}_k| - |\VEC{y}_0|}}{h'(q)^{k}}\,
		\frac{|\VEC{y}_k|}{|\VEC{y}_0|},
	\end{align*}
	for all $\VEC{x}_0, \ldots, \VEC{x}_{k-1} \in \N_0^d$ and
	$\VEC{y}_1, \ldots, \VEC{y}_k \in S^\ast$.
    Moreover, this is the law of the $Q$-process\\
	$\bigl( (\VEC{T}^{\uparrow}_k, \VEC{M}^{\uparrow}_{k+1}),\, k \in \N_0 \bigr)$, associated with
	$\bigl( (\VEC{T}_k, \VEC{M}_{k+1}),\, k \in \N_0 \bigr)$, with transition
	probabilities
	\begin{multline} \label{eq: qtrans_clonemutd}
		\P^{\uparrow}_{\VEC{y}_0}\!\big( \VEC{T}_k = \VEC{x}_k, \VEC{M}_{k+1} = \VEC{y}_{k+1} \,\Big|\, \VEC{T}_{k-1} = \VEC{x}_{k-1}, \VEC{M}_k = \VEC{y}_k \big)
		= \\ \frac{q^{|\VEC{y}_{k+1}| - |\VEC{y}_k|}}{h'(q)}\,
		\frac{|\VEC{y}_{k+1}|}{|\VEC{y}_k|}\,
		\P_{\VEC{y}_k}\!\big( \VEC{T}_0 = \VEC{x}_k, \VEC{M}_1 = \VEC{y}_{k+1} \big).
	\end{multline}

\end{theorem}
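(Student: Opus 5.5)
The plan is to reduce everything to the mutant chain, using the Markov structure of Proposition~\ref{prop:GTyMMarkov}(iii) together with the limit already computed in Theorem~\ref{thm: Qprocess}.

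Write $A$ for the event $\{\VEC{T}_0=\VEC{x}_0,\VEC{M}_1=\VEC{y}_1,\dots,\VEC{T}_{k-1}=\VEC{x}_{k-1},\VEC{M}_k=\VEC{y}_k\}$. This event is measurable with respect to the first $k$ steps of the clone-mutant chain. By Remark~\ref{re: sameextime}, the event $\{k+n<\tau_\VEC{M}<\infty\}$ is determined by $(\VEC{M}_j)_{j\ge k}$. Hence, by the Markov property of $((\VEC{T}_\ell,\VEC{M}_{\ell+1}))$, whose transitions~\eqref{probatranTM} depend only on the current mutant vector, and by the branching property of $\VEC{M}$, we get
\[
\P_{\VEC{y}_0}(A,\ k+n<\tau_\VEC{M}<\infty)=\P_{\VEC{y}_0}(A)\,\P_{\VEC{y}_k}(n<\tau_\VEC{M}<\infty).
\]
Dividing by $\P_{\VEC{y}_0}(k+n<\tau_\VEC{M}<\infty)$ gives
\[
\P_{\VEC{y}_0}(A\mid k+n<\tau_\VEC{M}<\infty)=\P_{\VEC{y}_0}(A)\,\frac{\P_{\VEC{y}_k}(n<\tau_\VEC{M}<\infty)}{\P_{\VEC{y}_0}(k+n<\tau_\VEC{M}<\infty)}.
\]

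For the limit of this ratio I would not redo the Nakagawa-type asymptotics. Instead I apply the same factorisation to the event $\{\VEC{M}_k=\VEC{y}_k\}$ alone:
\[
\P_{\VEC{y}_0}(\VEC{M}_k=\VEC{y}_k\mid k+n<\tau_\VEC{M}<\infty)=\P_{\VEC{y}_0}(\VEC{M}_k=\VEC{y}_k)\times(\text{the same ratio}).
\]
Theorem~\ref{thm: Qprocess} with $\ell=1$ says this converges to $\P_{\VEC{y}_0}(\VEC{M}_k=\VEC{y}_k)\,q^{|\VEC{y}_k|-|\VEC{y}_0|}h'(q)^{-k}|\VEC{y}_k|/|\VEC{y}_0|$. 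Whenever $\P_{\VEC{y}_0}(\VEC{M}_k=\VEC{y}_k)>0$, this shows the ratio itself converges to $q^{|\VEC{y}_k|-|\VEC{y}_0|}h'(q)^{-k}|\VEC{y}_k|/|\VEC{y}_0|$. When $\P_{\VEC{y}_0}(\VEC{M}_k=\VEC{y}_k)=0$, both sides vanish because $\P(A)=0$. This yields the displayed formula for $\P^\uparrow_{\VEC{y}_0}$.

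Next I check that the limit is a genuine probability law. Summing over $\VEC{x}_0,\dots,\VEC{x}_{k-1}$ and the intermediate $\VEC{y}$'s recovers the marginal of $\VEC{M}^\uparrow_k$, which is a probability by Theorem~\ref{thm: Qprocess}. Hence there is no loss of mass, and convergence of the finite-dimensional conditional laws holds.

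For the transition probabilities, take the ratio of the formula at horizons $k+1$ and $k$. Using~\eqref{probatranTM}, the unconditioned law factorises as the law up to time $k$ times $\P_{\VEC{y}_k}(\VEC{T}_0=\VEC{x}_k,\VEC{M}_1=\VEC{y}_{k+1})$. The weights telescope to $q^{|\VEC{y}_{k+1}|-|\VEC{y}_k|}h'(q)^{-1}|\VEC{y}_{k+1}|/|\VEC{y}_k|$. This gives~\eqref{eq: qtrans_clonemutd} and shows that the conditional law depends only on $\VEC{y}_k$, so the limit process is Markov.

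Finally, that these transitions sum to one in $(\VEC{x}_k,\VEC{y}_{k+1})$ follows from~\eqref{eq: Qtransitions} after marginalising $\VEC{T}_0$. The only delicate point is the identification of the ratio limit, which relies entirely on Theorem~\ref{thm: Qprocess}.
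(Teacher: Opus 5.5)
Your proof is correct and follows the paper's argument. It uses the same Markov factorisation $\P_{\VEC{y}_0}(A,\,k+n<\tau_{\VEC{M}}<\infty)=\P_{\VEC{y}_0}(A)\,\P_{\VEC{y}_k}(n<\tau_{\VEC{M}}<\infty)$, the same limit of the ratio of non-extinction probabilities, and the same ratio-of-consecutive-horizons computation for the transitions. The only differences are minor: you read the ratio limit off the statement of Theorem~\ref{thm: Qprocess} (dividing by $\P_{\VEC{y}_0}(\VEC{M}_k=\VEC{y}_k)$ when it is positive) rather than quoting \eqref{eq: limratnstepgmM} from its proof, and you add an explicit no-loss-of-mass check that the paper leaves implicit.
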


\begin{proof}
	For every $k\in \N$, consider the event
	\[
		E_k\coloneqq\{ \VEC{T}_0 =\VEC{x}_0, \VEC{M}_{1}=\VEC{y}_1, \dots , \VEC{T}_{k-1}=\VEC{x}_{k-1},\VEC{M}_{k}=\VEC{y}_{k} \}.
	\]
	Write $P^{n}_{(\VEC{x}, \VEC{y})}$ for the $n$-step transition probabilities of
	$( \VEC{M}_k,\, k\in\N_0 )$ and recall from~\eqref{eq: exttotmut} and
	Proposition~\ref{prop: q} that $\VEC{q}^{\VEC{z}}=q^{|\VEC{z}|}$ is the probability
	that the mutant chain started from $\VEC{z}$ eventually dies out. By the Markov and
	branching properties,
	\[
		\P_{\VEC{y}_0}(k+n<\tau_\VEC{M}<\infty)
		= \E_{\VEC{y}_0}\bigl[ \ind_{ \{\VEC{M}_{k+n} \neq \VEC{0} \}} \VEC{q}^{\VEC{M}_{k+n}}\bigr]
		=\sum_{\VEC{z}\in S^\ast} P^{k+n}_{(\VEC{y}_0, \VEC{z})}\, q^{|\VEC{z}|}.
	\]
	Similarly, since $\VEC{M}_k=\VEC{y}_k$ on $E_k$ and $E_k$ is measurable with respect
	to the first $k$ allelic generations,
	\[
		\P_{\VEC{y}_0}(E_k,\, k+n<\tau_\VEC{M}<\infty)
		= \E_{\VEC{y}_0}\Bigl[ \ind_{E_k}\,
		\E_{\VEC{M}_k} \bigl[ \ind_{ \{\VEC{M}_{n} \neq \VEC{0} \}} \VEC{q}^{\VEC{M}_{n}} \bigr] \Bigr]
		= \P_{\VEC{y}_0}(E_k) \sum_{\VEC{z}\in S^\ast} P^{n}_{(\VEC{y}_k, \VEC{z})}\, q^{|\VEC{z}|}.
	\]
	Therefore,
	\[
		\P_{\VEC{y}_0}(E_k \mid k+n<\tau_\VEC{M}<\infty)=\P_{\VEC{y}_0}(E_k \mid k+n<\tau_{(\VEC{T},\VEC{M})}<\infty)
		= \P_{\VEC{y}_0}(E_k)\,
		\frac{\sum_{\VEC{z}\in S^\ast} P^{n}_{( \VEC{y}_{k}, \VEC{z})} q^{|\VEC{z}|}}{\sum_{\VEC{z}\in S^\ast} P^{k+n}_{(\VEC{y}_0, \VEC{z})} q^{|\VEC{z}|}},
	\]
	and by using~\eqref{eq: limratnstepgmM} we obtain
	\[
		\P^{\uparrow}_{\VEC{y}_0}(E_k)= \lim_{n\rightarrow \infty} \P_{\VEC{y}_0}(E_k \mid k+n<\tau_\VEC{M}<\infty)
		= \P_{\VEC{y}_0}(E_k)\, \frac{1}{h'(q)^{k}}\, \frac{q^{|\VEC{y}_k|}|\VEC{y}_k |}{q^{|\VEC{y}_0|}|\VEC{y}_0|}.
	\]

	Using this expression, we compute
	\begin{equation*}
		\begin{split}
			\P^{\uparrow}_{\VEC{y}_0}&\big( \VEC{T}_{k} = \VEC{x}_{k}, \VEC{M}_{k+1}=\VEC{y}_{k+1} \mid \VEC{T}_{k-1} =\VEC{x}_{k-1}, \VEC{M}_{k}=\VEC{y}_{k} \big)\\
			&= \frac{\P^{\uparrow}_{\VEC{y}_0}\big( \VEC{T}_{k} = \VEC{x}_{k}, \VEC{M}_{k+1}=\VEC{y}_{k+1}, \VEC{T}_{k-1} =\VEC{x}_{k-1}, \VEC{M}_{k}=\VEC{y}_{k} \big)}{\P^{\uparrow}_{\VEC{y}_0}\big( \VEC{T}_{k-1} =\VEC{x}_{k-1}, \VEC{M}_{k}=\VEC{y}_{k} \big)}\\
			&=\frac{\frac{q^{|\VEC{y}_{k+1}|-|\VEC{y}_0|}}{h'(q)^{k+1}}\frac{|\VEC{y}_{k+1}|}{|\VEC{y}_0|}\P_{\VEC{y}_0}\big( \VEC{T}_{k} = \VEC{x}_{k}, \VEC{M}_{k+1}=\VEC{y}_{k+1}, \VEC{T}_{k-1} =\VEC{x}_{k-1}, \VEC{M}_{k}=\VEC{y}_{k} \big)}{\frac{q^{|\VEC{y}_{k}|-|\VEC{y}_0|}}{h'(q)^{k}}\frac{|\VEC{y}_{k}|}{|\VEC{y}_0|}\P_{\VEC{y}_0}\big( \VEC{T}_{k-1} =\VEC{x}_{k-1}, \VEC{M}_{k}=\VEC{y}_{k} \big)}\\
			&=\frac{q^{|\VEC{y}_{k+1}|- |\VEC{y}_k|}}{h'(q)} \frac{ | \VEC{y}_{k+1} | }{ | \VEC{y}_k |}\P_{\VEC{y}_0}\big( \VEC{T}_{k} = \VEC{x}_{k}, \VEC{M}_{k+1}=\VEC{y}_{k+1} \Big| \VEC{T}_{k-1} =\VEC{x}_{k-1}, \VEC{M}_{k}=\VEC{y}_{k} \big)\\
			&=\frac{q^{|\VEC{y}_{k+1}|- |\VEC{y}_k|}}{h'(q)} \frac{ | \VEC{y}_{k+1} | }{ | \VEC{y}_k |}\P_{\VEC{y}_k}\big( \VEC{T}_{0} =\VEC{x}_{k}, \VEC{M}_{1}=\VEC{y}_{k+1} \big),
		\end{split}
	\end{equation*}
	where the last equality holds by Proposition~\ref{prop:GTyMMarkov}.
	This identifies the transition probabilities in~\eqref{eq: qtrans_clonemutd} and
	completes the proof.
\end{proof}

\section{Spine decomposition for the mutant chain} \label{sec: spine}

Throughout this section we assume~\eqref{hyp:size}, so that
$\rho=\E_i[|\VEC{M}_1|]<\infty$.
Since the $d$-type mutant chain $(\VEC{M}_k,\, k \in \N_0)$ is a $d$-type BGW process,
the spine decomposition for multitype branching processes developed
in~\cite{BigKyp,GeorgiiBaake} applies, and we can give an explicit construction of the
associated size-biased tree using Lemma~\ref{lem: eqMand|VECM|} and
Proposition~\ref{prop: meanmxrho}. We follow the presentation
of~\cite[Section 3.3]{LuiRen}.

The genealogy of $(\VEC{M}_k,\, k \in \N_0)$ is encoded as a random tree $\mathcal{T}$
in which each vertex carries a type, or \emph{mark}, in $[d]$. We write
$\sigma_u \in [d]$ for the mark of a vertex $u \in \mathcal{T}$.
Let $(\mathcal{T}^{\sigma}, \mathcal{F}_\infty)$ denote the corresponding measurable
space of marked trees, where $\mathcal{F}_k$ is the $\sigma$-algebra generated by the
restriction of the marked tree to its first $k$ generations, and
$\mathcal{F}_\infty = \bigvee_{k \geq 0} \mathcal{F}_k$.
We equip this space with a family of probability measures $(\P_{i})_{i \in [d]}$ such
that, under $\P_{i}$, the process $(\VEC{M}_k(\mathcal{T}, \sigma),\, k \in \N_0)$
associated with the marked tree $(\mathcal{T}, \sigma)$ is a $d$-type BGW process
started from a single individual of type $i$, with offspring distribution
$\P_{i}(\VEC{M}_1 \in \cdot)$.

The size-biased distributions $\P_{i}(\widehat{\VEC{M}}_1 \in \cdot)$, $i \in [d]$, are
defined by
\begin{equation*}
	\P_{i}\!\big(\widehat{\VEC{M}}_1 = \boldsymbol{\ell}\big)
	\coloneqq \P_{i}(\VEC{M}_1 = \boldsymbol{\ell})
	\frac{\boldsymbol{\ell}\cdot \VEC{v}}{\rho\,\VEC{v}(i)},
	\qquad \boldsymbol{\ell} \in \overline{\N}_0^{\,i,\ast},\ i \in [d],
\end{equation*}
where $\rho$ is the maximal eigenvalue of the mean matrix associated with
$( \VEC{M}_k,\, k \in \N_0 )$, computed in~\eqref{eq: derivativef}, and $\VEC{v}$ is
its right eigenvector.
By Proposition~\ref{prop: meanmxrho}, for the $d$-type mutant chain, we have that
$\rho = \frac{rm}{1 - (1-r)m}$ with right eigenvector $\VEC{v} = \VEC{1}$. The
size-biasing therefore simplifies to
\begin{equation*}
	\P_{i}\!\big(\widehat{\VEC{M}}_1 = \boldsymbol{\ell}\big)
	= \frac{|\boldsymbol{\ell}|}{\rho}\,
	\P_{i}(\VEC{M}_1 = \boldsymbol{\ell}),
	\qquad \boldsymbol{\ell} \in \overline{\N}_0^{\,i,\ast},\ i \in [d].
\end{equation*}
Applying the results of~\cite{BigKyp,GeorgiiBaake} together with
Lemma~\ref{lem: eqMand|VECM|} and Proposition~\ref{prop: meanmxrho}, the size-biased
distribution factorises as
\begin{equation}\label{eq:sizebiasedfactorises}
	\begin{split}
		\P_{i}\!\big(\widehat{\VEC{M}}_1 = \boldsymbol{\ell}\big)
		&= \frac{|\boldsymbol{\ell}|}{\rho}\,
		\P_{i}(|\VEC{M}_1| = |\boldsymbol{\ell}|)\,
		\P_{i}\!\big(\VEC{M}_1 = \boldsymbol{\ell} \,\big|\, |\VEC{M}_1| = |\boldsymbol{\ell}|\big) \\
		&= \P_{i}\!\big(\widehat{M}_1 = |\boldsymbol{\ell}|\big)
		\binom{|\boldsymbol{\ell}|}{\boldsymbol{\ell}(1),\dots,\boldsymbol{\ell}(d)}
		\prod_{j \neq i} \VEC{P}_{ij}^{\,\boldsymbol{\ell}(j)},
		\qquad \boldsymbol{\ell} \in \overline{\N}_0^{\,i,\ast},\ i \in [d].
	\end{split}
\end{equation}
That is, $\widehat{\VEC{M}}_1$ under $\P_{i}$ is obtained by size-biasing the offspring
distribution of the underlying mutant-count chain $(|\VEC{M}_k|,\, k \in \N_0)$, then
colouring the offspring according to the multinomial distribution given by the $i$-th
row of the mutation matrix $\VEC{P}$. Recall that $(|\VEC{M}_k|,\, k \in \N_0)$ is
equal in law to the infinite-alleles mutant chain $(M_k,\, k \in \N_0)$ by
Lemma~\ref{lem: eqMand|VECM|}.

For each type $i \in [d]$, we now define a probability measure $\mathbb{Q}_i$ on
$(\mathcal{T}^{\sigma}, \mathcal{F}_\infty)$ under which the marked tree is almost
surely infinite.
The construction proceeds as follows. Start from a single individual
$\gamma_0 = \emptyset$ of type $\eta_0 = i$. This individual produces
$\boldsymbol{\ell}$ offspring with probability
$\P_{\eta_0}(\widehat{\VEC{M}}_1 = \boldsymbol{\ell})$.
Conditionally on $\boldsymbol{\ell}$, each child of type $j$ is selected as the first
vertex of the spine with probability
$\tfrac{\VEC{v}(j)}{\boldsymbol{\ell} \cdot \VEC{v}}$, which reduces to
$\tfrac{1}{|\boldsymbol{\ell}|}$ under $\VEC{v} = \VEC{1}$; that is, the spine
individual $\gamma_1$ is chosen uniformly at random among the children of $\gamma_0$,
and we denote its type by $\eta_1 \in [d]$.
Each of the remaining $|\boldsymbol{\ell}| - 1$ children starts an independent $d$-type
BGW subtree with offspring distribution $\P_{j}(\VEC{M}_1 \in \cdot)$, where $j$ is its
type.
The spine individual $\gamma_1$ then reproduces according to the size-biased law
$\P_{\eta_1}(\widehat{\VEC{M}}_1 \in \cdot)$, and the construction is iterated.
This produces the \emph{spine} $\gamma\coloneqq(\gamma_k)_{k \geq 0}$, a sequence of
distinguished individuals in $\mathcal{T}$ with associated mark sequence
$\eta \coloneqq(\eta_k)_{k \geq 0}$ in $[d]$ defined by
$\eta_k \coloneqq \sigma_{\gamma_k}$. Since size-biased Galton--Watson trees contain an
infinite ray by construction, this procedure yields an almost surely infinite marked
tree.

Let $\widetilde{\mathbb{Q}}_i$ denote the joint law of
$((\mathcal{T}, \sigma); (\gamma, \eta))$. Then $\mathbb{Q}_i$ is defined as the
marginal of $\widetilde{\mathbb{Q}}_i$ on
$(\mathcal{T}^{\sigma}, \mathcal{F}_\infty)$.

\begin{remark}\label{re: spine color}
	Under $\widetilde{\mathbb{Q}}_i$, the spine type sequence $(\eta_k)_{k \geq 0}$ is a
	Markov chain on $[d]$ with transition matrix $\VEC{P}$ and initial state
	$\eta_0 = i$. Indeed, for any $i, j \in [d]$ with $i\neq j$,
	\begin{equation*}
		\begin{split}
			\widetilde{\mathbb{Q}}_i(\eta_1 = j)
			&= \sum_{\boldsymbol{\ell} \in \overline{\N}_0^{\,i,\ast}}
			\P_i(\eta_1 = j \mid \widehat{\VEC{M}}_1 = \boldsymbol{\ell})\,
			\P_i(\widehat{\VEC{M}}_1 = \boldsymbol{\ell}) \\
			&= \sum_{n=1}^{\infty} \P_i(\widehat{M}_1 = n)
			\sum_{\boldsymbol{\ell}:\, |\boldsymbol{\ell}| = n}
			\frac{\boldsymbol{\ell}(j)}{n}\,
			\P_i(\VEC{M}_1 = \boldsymbol{\ell} \mid |\VEC{M}_1| = n) \\
			&= \sum_{n=1}^{\infty} \P_i(\widehat{M}_1 = n)\, \VEC{P}_{ij}
			\;=\; \sum_{n=1}^{\infty} \frac{n\, \P_i(M_1 = n)}{\rho}\, \VEC{P}_{ij}
			\;=\; \VEC{P}_{ij},
		\end{split}
	\end{equation*}
	where the last equality uses $\E_i[M_1] = \rho$. The Markov property follows from the
	iterative construction.
\end{remark}

Under $\widetilde{\mathbb{Q}}_i$, the marked tree $(\mathcal{T}, \sigma)$ is called a
\emph{size-biased multitype BGW tree}. We denote by
$\widehat{\VEC{M}}^{(\gamma_k)}_1$ the size-biased offspring vector produced by the
spine individual $\gamma_k$. Conditionally on the spine $\gamma$ and on the offspring
vectors $(\widehat{\VEC{M}}^{(\gamma_{\ell-1})}_1)_{\ell \geq 1}$ at the spine
vertices, the processes $\{\VEC{M}^{(\ell)}\}_{\ell \geq 1}$ are independent $d$-type
mutant processes with the original (non-size-biased) offspring distributions, started
from
\begin{equation*}
	\VEC{M}^{(\ell)}_0
	\coloneqq \widehat{\VEC{M}}^{(\gamma_{\ell-1})}_1 - \VEC{e}_{\eta_\ell},
\end{equation*}
that is, from the offspring vector of the spine individual $\gamma_{\ell-1}$ with the
spine child $\gamma_\ell$ removed. By construction, the mutant population at the $k$-th
allelic generation decomposes as
\begin{equation}\label{eq: spineprocess}
	\VEC{M}_k = \VEC{e}_{\eta_k} + \sum_{\ell=1}^{k} \VEC{M}^{(\ell)}_{k-\ell}.
\end{equation}
We refer to~\eqref{eq: spineprocess} as the \emph{spine decomposition} of the multitype
BGW process.

As in the one-dimensional case~\cite{LyonsPemantlePeres}, for each $i \in [d]$,
$\mathbb{Q}_i$ is a martingale change of measure from $\P_i$:
\begin{equation*}
	\mathrm{d}\mathbb{Q}_i \big|_{\mathcal{F}_k}
	= W_k \, \mathrm{d}\P_i \big|_{\mathcal{F}_k}, \qquad k \in \N,
\end{equation*}
where $W_k \coloneqq \rho^{-k} |\VEC{M}_k| = \rho^{-k} M_k$ is a non-negative
$\P_i$-martingale. Since $\VEC{v} = \VEC{1}$, the multitype martingale
$\rho^{-k}\, \VEC{M}_k \cdot \VEC{v}$ coincides with the mutant-count chain martingale
$\rho^{-k} \vert \VEC{M}_k \vert$.

\subsection{Application of the spine decomposition: the supercritical case}
\label{sub:spinesupercritical}

Under hypotheses~\eqref{hyp:timextsupc} and~\eqref{hyp:size}, the spine
decomposition yields a probabilistic proof of the Kesten--Stigum theorem. This approach
was introduced by Lyons, Pemantle and Peres~\cite{LyonsPemantlePeres} for the
single-type Galton--Watson process, and provides necessary and sufficient conditions
for the $L^1$-convergence of the martingale defined by the population size normalized
by its mean. The corresponding result for multitype BGW processes is due to Kurtz,
Lyons, Pemantle and Peres~\cite{KLPP}. We adapt it below to the $d$-type mutant chain.

\begin{proposition}[{\cite[Theorems 1 and 2]{KLPP}}] \label{thm: KS-multitype}
	Suppose that~\eqref{hyp:timextsupc} and~\eqref{hyp:size} hold, that is, the
	$d$-type mutant chain is supercritical with finite offspring mean. Then the following
	hold.
	\begin{enumerate}
		\item[(i)] There exists a non-negative random variable $W$ such that, for every
		$i \in [d]$,
		\begin{equation} \label{eq:spine-limit-supercritical}
			\lim_{k \to \infty} \rho^{-k} \VEC{M}_k = W \VEC{u}
			\qquad \P_i\text{-a.s.}
		\end{equation}
		Moreover, $\P_i(W > 0) > 0$ if and only if
		\begin{equation}\label{eq:LlogL}
			\E_i\!\big[ \VEC{M}_1(j) \ln^{+} \VEC{M}_1(j) \big] < \infty
			\quad \text{for all } i, j \in [d].
		\end{equation}
		\item[(ii)] Under~\eqref{eq:LlogL}, on the event of non-extinction
		$\{ \tau_{\VEC{M}} = \infty\}$ and for every $i \in [d]$,
		\[
			\lim_{k \to \infty} \frac{\VEC{M}_k}{|\VEC{M}_k|}
			= \VEC{u} \qquad \P_i\text{-a.s.}
		\]
		That is, the left eigenvector $\VEC{u}$, which is the stationary distribution of
		the mutation matrix $\VEC{P}$, gives the asymptotic proportions of types in the
		population.
	\end{enumerate}
\end{proposition}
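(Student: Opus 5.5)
The plan is to reduce the statement to the Kesten--Stigum theorem of \cite{KLPP}, and then read off the specific form of the limit from the structure $\VEC{A}=\rho\VEC{P}$ established in Proposition~\ref{prop: meanmxrho}. The strategy is: first check that the $d$-type mutant chain satisfies the hypotheses of \cite[Theorems 1 and 2]{KLPP}, then identify the limit using the spine decomposition \eqref{eq: spineprocess}, and finally derive part (ii) by normalising.

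\textbf{Hypotheses of \cite{KLPP}.} Under \eqref{hyp:size}, $\rho<\infty$ and the mean matrix $\VEC{A}=\rho\VEC{P}$ is finite. By Remark~\ref{re: asspr} it is positive regular, since $\VEC{P}$ is irreducible and aperiodic. Under \eqref{hyp:timextsupc}, Remark~\ref{re:Hcharacterisation} gives $m>1$, hence $\rho>1$ by Proposition~\ref{prop: meanmxrho}, so the chain is supercritical. The Perron eigenvalue is $\rho$, with right eigenvector $\VEC{1}$ and left eigenvector $\VEC{u}$, normalised so that $\VEC{u}\cdot\VEC{1}=1$. With these choices, \cite[Theorem 1]{KLPP} states that $\rho^{-k}\VEC{M}_k\to W\VEC{u}$ almost surely, where $W=\lim_k \rho^{-k}\VEC{M}_k\cdot\VEC{1}$. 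Because $\VEC{v}=\VEC{1}$, this $W$ is exactly the limit of the martingale $W_k=\rho^{-k}|\VEC{M}_k|$ introduced above.

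\textbf{The $L\log L$ criterion.} The condition in \cite[Theorem 2]{KLPP} is $\E_i[\VEC{M}_1(j)\ln^+\VEC{M}_1(j)]<\infty$ for all $i,j$, which is \eqref{eq:LlogL} verbatim. So the ``if and only if'' part of (i) follows directly from \cite{KLPP}. Via the spine, the sufficiency direction runs as follows. One checks $\limsup W_k<\infty$ $\mathbb{Q}_i$-a.s. by conditioning on the spine and applying Borel--Cantelli to the immigrants $\widehat{\VEC{M}}^{(\gamma_{\ell-1})}_1$. This shows $\mathbb{Q}_i\ll\P_i$ on $\mathcal F_\infty$, hence $\E_i W=1$. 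By \eqref{eq:sizebiasedfactorises}, $|\widehat{\VEC{M}}_1|$ has the same law as $\widehat M_1$, independently of type, so the estimate reduces to the single-type one of \cite{LyonsPemantlePeres}.

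\textbf{Part (ii).} On $\{\tau_{\VEC{M}}=\infty\}$ we need $W>0$. Under \eqref{eq:LlogL}, $\{W=0\}$ has probability equal to the extinction probability. This holds because $\P_i(W=0)$ is a fixed point of $\VEC{f}$ that is not $\VEC{1}$, which requires $\E W=1$; by the uniqueness argument in the proof of Proposition~\ref{prop: q}, it therefore equals $q$. Hence $W>0$ a.s. on survival. Dividing then gives
\[
\frac{\VEC{M}_k}{|\VEC{M}_k|}=\frac{\rho^{-k}\VEC{M}_k}{\rho^{-k}\VEC{M}_k\cdot\VEC{1}}\longrightarrow \frac{W\VEC{u}}{W\,\VEC{u}\cdot\VEC{1}}=\VEC{u}.
\]

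\textbf{Main obstacle.} The hardest step is the almost-sure vector convergence in (i), not just the convergence of the scalar martingale. The spine only controls $|\VEC{M}_k|$, so a separate argument is needed for the type proportions. Rather than redo the law-of-large-numbers argument of \cite{KLPP} over the marks, I would invoke \cite[Theorem 1]{KLPP} directly for this step. If a self-contained check is wanted, one can use the colouring: conditionally on the counts, each generation is a multinomial colouring driven by $\VEC{P}$. Since the spine type sequence is a $\VEC{P}$-chain (Remark~\ref{re: spine color}), mixing of $\VEC{P}$ combined with a conditional strong law over the large generations would give proportions converging to $\VEC{u}$.
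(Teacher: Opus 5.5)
Your proposal is correct and follows the paper's route. The paper gives no separate argument: it simply invokes \cite[Theorems 1 and 2]{KLPP} after noting, as you do, that the mutant chain is positive regular and supercritical with mean matrix $\VEC{A}=\rho\VEC{P}$, right eigenvector $\VEC{1}$ and left eigenvector $\VEC{u}$, so that $W$ is the limit of $\rho^{-k}|\VEC{M}_k|$. Your extra steps are sound supplements rather than a different proof; in the fixed-point argument for part (ii), the uniqueness step works because $\E_j W=1$ for every $j$, which makes every coordinate of $(\P_j(W=0))_j$ strictly less than $1$.
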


\subsection{Application of the spine decomposition: the (sub)critical case}

Under hypothesis~\eqref{hyp:timextsubc} the spine decomposition admits a different
interpretation, related to conditioning on non-extinction. Roelly-Coppoletta and
Rouault~\cite{RRsub} and Geiger~\cite{Geiger99} showed that a (sub)critical single-type
BGW process conditioned on non-extinction can be obtained from the original process by
a martingale change of measure. The resulting process is the $Q$-process and admits a
spine decomposition. We apply this to the mutant-count chain
$(|\VEC{M}_k|,\, k \in \N_0)$.

\begin{proposition}\label{thm: spine subcritical}
	Suppose hypothesis~\eqref{hyp:timextsubc} holds, that is, the $d$-type mutant chain is
	(sub)critical. Then, for each $i \in [d]$, the size-biased distribution of
	$|\VEC{M}_1|$ under $\P_i$ coincides with the offspring distribution of the associated
	$Q$-process:
	\[
		\P_i\bigl(\widehat{|\VEC{M}_1|} \in \cdot\bigr) = \P_i\bigl(|\VEC{M}^{\uparrow}_1| \in \cdot\bigr),
	\]
	where the right-hand side is given by~\eqref{eq: sizebiasedgral} with $q=1$.
	Furthermore, by Corollary~\ref{cor: Conmuta},
	$|\VEC{M}^\uparrow_1| \overset{\mathcal{L}}{=} M^{\uparrow}_1$ under the measure
	$\P_i$. Finally, $\widehat{|\VEC{M}_1|}=|\widehat{\VEC{M}}_1|$ in distribution, since
	the right eigenvector of $\VEC{A}$ is $\VEC{1}$.
\end{proposition}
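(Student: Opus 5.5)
The plan is to compare the two one-step laws directly, starting from explicit formulas already available in the paper, and to treat the three assertions of the proposition in order.

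For the first assertion, start from the size-biased law of the mutant-count chain. Its offspring distribution is $\P_i(|\VEC{M}_1|=n)$, which by Lemma~\ref{lem: eqMand|VECM|} does not depend on $i$, and its mean is $h'(1)=\rho$ by Proposition~\ref{prop: meanmxrho}. Hence
\[
\P_i\bigl(\widehat{|\VEC{M}_1|}=n\bigr)=\frac{n}{\rho}\,\P_i(|\VEC{M}_1|=n),\qquad n\in\N .
\]
Under~\eqref{hyp:timextsubc}, Proposition~\ref{prop: q} gives $q=1$, and Theorem~\ref{thm: Qprocess} gives $h'(q)=h'(1)=\rho$. Substituting these into~\eqref{eq: sizebiasedgral} yields
\[
\P_i(|\VEC{M}_1^\uparrow|=n)=\frac{n}{\rho}\,\P_i(|\VEC{M}_1|=n).
\]
The two expressions agree, which proves the first identity.

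The second assertion is a restatement of Corollary~\ref{cor: Conmuta}. Equation~\eqref{eq:countQequality} at $k=1$ gives $|\VEC{M}^\uparrow_1|\overset{\mathcal{L}}{=}M^\uparrow_1$ under $\P_i$.

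For the third assertion, sum the factorisation~\eqref{eq:sizebiasedfactorises} over all $\boldsymbol{\ell}\in\overline{\N}_0^{\,i,\ast}$ with $|\boldsymbol{\ell}|=n$. The multinomial weights sum to one because the $i$-th row of $\VEC{P}$ sums to one and $\VEC{P}_{ii}=0$. This leaves
\[
\P_i(|\widehat{\VEC{M}}_1|=n)=\frac{n}{\rho}\,\P_i(|\VEC{M}_1|=n)=\P_i\bigl(\widehat{|\VEC{M}_1|}=n\bigr).
\]
This step is exactly where the right eigenvector $\VEC{v}=\VEC{1}$ enters: the multitype size-bias weight $\boldsymbol{\ell}\cdot\VEC{v}$ reduces to $|\boldsymbol{\ell}|$. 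The identity therefore holds coordinatewise in $n$, so the two laws coincide.

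The only delicate point is bookkeeping. One must make sure that the notation $\widehat{|\VEC{M}_1|}$ means size-biasing the single-type law by its mean $h'(1)=\rho$, and that $\rho<\infty$, which follows from~\eqref{hyp:size}. Once these are fixed, each step is a one-line comparison of explicit formulas.
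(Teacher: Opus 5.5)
Your proposal is correct and follows the same route as the paper. The paper justifies the proposition inline: it compares the size-biased law $\frac{n}{\rho}\P_i(|\VEC{M}_1|=n)$ with \eqref{eq: sizebiasedgral} at $q=1$ and $h'(1)=\rho$, reads the second claim off Corollary~\ref{cor: Conmuta}, and obtains the third because $\VEC{v}=\VEC{1}$ turns the weight $\boldsymbol{\ell}\cdot\VEC{v}$ into $|\boldsymbol{\ell}|$. Your summation of \eqref{eq:sizebiasedfactorises} over $|\boldsymbol{\ell}|=n$ simply makes that last step explicit.
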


As a consequence, in the (sub)critical case, the spine decomposition of the $d$-type
mutant chain admits the following description. The spine individual at each allelic
generation reproduces according to the law of $M_1^{\uparrow}$, that is, the
infinite-alleles mutant chain conditioned on non-extinction. One of its children,
chosen uniformly at random, continues the spine, while the remaining children initiate
independent $d$-type mutant processes with the original (non-conditioned) offspring
distributions. At each step, the types of the offspring are assigned according to the
row of $\VEC{P}$ indexed by the parent's type, and the type sequence along the spine is
the Markov chain described in Remark~\ref{re: spine color}.


\appendix

\section{Proof of Proposition \ref{prop: genlawT_0M_1} }\label{app: proofpgf T_0Mvec}
Conditioning on the random vector $\boldsymbol{\xi}^i$ of the offspring of the initial individual of type $i$, equation~\eqref{eq:MGFone} reads
\begin{equation}\label{eq:torre1}
\varphi_{i}( x, \VEC{\bar{y}}^i)
=\sum\limits_{\VEC{v} \in \mathbb{N}_0^d} \boldsymbol{\mu}_i(\VEC{v})\, \mathbb{E}_{\VEC{e}_i}^r\big[ x^{\VEC{T}_0(i)} \prod_{j\neq i}   \VEC{y}(j)^{ \VEC{M}_1(j) }  \Bigm| \boldsymbol{\xi}^{i} = \VEC{v} \big],
\qquad x\in [0,1], \, \VEC{y}\in [0,1]^d.
\end{equation}
It remains to compute the conditional expectation. Given that the initial individual has $\VEC{v}(i)$ children of type $i$, let $(\VEC{T}_{0,1}(i),\VEC{M}_{1,1}) ,\dots,(\VEC{T}_{0,\VEC{v}(i)}(i),\VEC{M}_{1,\VEC{v}(i)})$ be $\VEC{v}(i)$ i.i.d. copies of the random vector $(\VEC{T}_0(i),\VEC{M}_1)$, obtained by the branching property. Then
\begin{align*}
\mathbb{E}_{\VEC{e}_i}^r\left[  x^{\VEC{T}_0(i)} \prod_{j\neq i}   \VEC{y}(j)^{ \VEC{M}_1(j) }
\Bigm| \boldsymbol{\xi}^i = \VEC{v}
\right]
&  = \mathbb{E}_{\VEC{e}_i}^r\big[  {x}^{1 +\sum\limits_{k=1}^{\VEC{v}(i)}\VEC{T}_{0,k}(i)} \prod_{j\neq i}    \VEC{y}(j)^{\VEC{v}(j)+\sum\limits_{k=1}^{\VEC{v}(i)} \VEC{M}_{1,k}(j)}  \big] \\
&  = {x} \big(\varphi_i( {x}, \VEC{\bar{y}}^i )\big)^{\VEC{v}(i)} \prod_{j\neq i} \VEC{y}(j)^{ \VEC{v}(j)}, \qquad \VEC{v} \in \mathbb{N}_0^d.
\end{align*}
 Substituting into~\eqref{eq:torre1} and comparing with~\eqref{eq: g_i} gives~\eqref{eq: pgfTM_i}.

For each $i\in[d]$ and each fixed $\VEC{\bar{y}}^i \in (0,1]^d$, define $g_i^*(z)\coloneqq g_i(z, \VEC{\bar{y}}^i)$.
This function is analytic in a neighborhood of zero, being a probability generating function, and $g_i^*(0) \neq 0$. Indeed, suppose that $g_{i}(0, \VEC{\bar{y}}^i)=0$ for some $\VEC{\bar{y}}^i \in (0,1]^d$. Writing
\[
\beta_k \coloneqq \sum_{\VEC{v} \in \bar{\N}^i_0 } \boldsymbol{\mu}_i\big(\VEC{v}+k\VEC{e}_i\big), \qquad k \in \N_0,
\qquad \text{so that} \qquad
\E_{\VEC{e}_i}^r[ \boldsymbol{\xi}^{i}(i) ] =\sum_{k\in\N} k \beta_k,
\]
the assumption forces $\beta_0=0$, and thus
$
    \E_{\VEC{e}_i}^r[ \boldsymbol{\xi}^{i}(i) ]\geq \beta_1+2(1-\beta_1)=2-\beta_1 .
$
Since assumption~\eqref{hyp:size} implies $ \mathbb{E}_{\VEC{e}_i}^r [ \boldsymbol{\xi}^{i}(i) ]=1-r$, we get $\beta_1 \geq 1+r> 1$, which is impossible.

By the Lagrange inversion formula (see for instance~\cite[Theorem 6.2]{pitman2006combinatorial}), for each $i\in[d]$ and each fixed $\VEC{\bar{y}}^i \in (0,1]^d$, the equation
$
\varphi^*_i(x)= xg^*_i(\varphi^*_i(x))
$
has a unique analytic solution in a neighborhood of $0$, whose expansion in powers of $x$ satisfies
\begin{equation}\label{eq: LIF}
[x^m](\varphi_i(x,\VEC{\bar{y}}^i))^{q} = \frac{q}{m}[x^{m-q}](g_i(x,\VEC{\bar{y}}^i))^m,
\qquad m \geq q \geq 1.
\end{equation}
That is, the coefficient of $x^m$ in $(\varphi_i(x,\VEC{\bar{y}}^i))^{q}$ equals $\frac{q}{m}$ times the coefficient of $x^{m-q}$ in $(g_i(x,\VEC{\bar{y}}^i))^m$.
Indeed, for $x \in (0,1)$ and $\VEC{\bar{y}}^i \in (0,1]^d$ we may write
\[
	{g}_i(x, \VEC{\bar{y}}^i)=\sum\limits_{k=0}^{\infty}a_{i,k}(\VEC{\bar{y}}^i)x^{k}, \qquad \text{where } a_{i,k}(\VEC{\bar{y}}^i)= \sum\limits_{\boldsymbol{\ell} \in \bar{\N}_0^i} \boldsymbol{\mu}_i(\boldsymbol{\ell} + k\VEC{e}_i)
    \prod\limits_{j\neq i} \VEC{y}(j)^{\boldsymbol{\ell}(j)},
\]
and, since $x \mapsto g_i(x,\VEC{\bar{y}}^i)$ is the generating function of the finite measure $\alpha_i = (a_{i,k}(\VEC{\bar{y}}^i) : k \in \mathbb{N}_0)$, the generating function of $\alpha^{*m}_i$ is
\[
	x \mapsto (g_i(x,  \VEC{\bar{y}}^i))^m=
 	\sum\limits_{k=0}^{\infty}x^k \sum\limits_{\boldsymbol{\ell} \in \bar{\N}_0^i }\boldsymbol{\mu}_i^{*m}(\boldsymbol{\ell} + k\VEC{e}_i)\prod\limits_{j\neq i} \VEC{y}(j)^{\boldsymbol{\ell}(j)} .
\]
Hence,  for fixed $\VEC{\bar{y}}^i \in (0,1]^d$ and $x \in (0,1)$,
\begin{equation}\label{eq:pgf TiMexpl}
\varphi_i(x,\VEC{\bar y}^i)
=(\varphi_i(x,\VEC{\bar y}^i))
= \sum\limits_{m=1}^{\infty} \frac{1}{m}\sum\limits_{\boldsymbol{\ell} \in\bar{\N}_0^i}\boldsymbol{\mu}_i^{*m}(\boldsymbol{\ell} + (m-1)\VEC{e}_i)\prod\limits_{j\neq i} \VEC{y}(j)^{\boldsymbol{\ell}(j)}x^m,
\end{equation}
while by definition
\[
\varphi_{i}(x,\VEC{\bar y}^i)
=\sum\limits_{m=1}^{\infty}\sum\limits_{\boldsymbol{\ell} \in \bar{\N}_0^i}\mathbb{P}_{i}^r(\VEC{T}_0(i)=m,\VEC{M}_1=\boldsymbol{\ell})\,x^{m}\prod\limits_{j\neq i}\VEC{y}(j)^{\boldsymbol{\ell}(j)}.
\]
Identifying coefficients yields
\[
	\mathbb{P}_{i}^r(\VEC{T}_0(i)=m,\VEC{M}_1= \boldsymbol{\ell}) = \frac{1}{m}\boldsymbol{\mu}_i^{*m}(\boldsymbol{\ell} + (m-1)\VEC{e}_i), \qquad \boldsymbol{\ell} \in \bar{\N}_0^i .
\]

\hfill $\square$
\section{Proof of Theorem~\ref{thm: Qprocess}}\label{app:proof4}

Throughout, $\VEC{f}_n$ denotes the $n$-th iterate of the offspring generating function
$\VEC{f}$ of $( \VEC{M}_k,\, k\in\N_0 )$, and $\overline{\VEC{f}}_n$ that of
$\overline{\VEC{f}}$, defined in~\eqref{eq barf}. By induction,
$\overline{\VEC{f}}_n(\VEC{s})=\VEC{f}_n(q\VEC{s})/q$ for all $n\in\N_0$.
Recall from Proposition~\ref{prop:barA} that $\overline{\VEC{f}}$ is the generating function
of a (sub)critical positively regular $d$-type BGW process with maximal eigenvalue
$\overline\rho=h'(q)\le 1$, right eigenvector $\VEC{1}$ and left eigenvector $\VEC{u}$,
normalized by $\VEC{u}\cdot\VEC{1}=1$.

 Since
$\VEC{f}(\VEC{q})=\VEC{q}$ implies $\VEC{f}_n(\VEC{q})=\VEC{q}$, for every
$\VEC{y}\in S^\ast$ and $n\in\N$,
\begin{equation}\label{eq:extinctionrepresentation}
	\sum_{\VEC{z}\in S^\ast} P^n_{(\VEC{y},\VEC{z})}\,\VEC{q}^{\VEC{z}}
	= \E_{\VEC{y}}\big[\VEC{q}^{\VEC{M}_n}\big]-\P_{\VEC{y}}(\VEC{M}_n=\VEC{0})
	= \VEC{f}_n^{\VEC{y}}(\VEC{q}) - \VEC{f}_n^{\VEC{y}}(\VEC{0})
	= \VEC{q}^{\VEC{y}} - \VEC{f}_n^{\VEC{y}}(\VEC{0}),
\end{equation}
and moreover
$\VEC{q}^{\VEC{y}} - \VEC{f}_n^{\VEC{y}}(\VEC{0}) = \VEC{q}^{\VEC{y}}\bigl(1-\overline{\VEC{f}}_n^{\VEC{y}}(\VEC{0})\bigr)$.

Fix $k_1<\dots<k_\ell$. For $n\in\N$,
conditioning on $\{ k_\ell+n<\tau_\VEC{M}<\infty \}$ and using the Markov and branching
properties together with~\eqref{eq:extinctionrepresentation} gives
\begin{equation*}
	\begin{split}
		\P_{\VEC{y}_0}( \VEC{M}_{k_1}= \VEC{y}_1, \dots, \VEC{M}_{k_\ell}=\VEC{y}_\ell \mid k_\ell + n < \tau_\VEC{M} <\infty )
		&= \P_{\VEC{y}_0}( \VEC{M}_{k_1}= \VEC{y}_1, \dots, \VEC{M}_{k_\ell}=\VEC{y}_\ell)
		\frac{\P_{\VEC{y}_\ell}( n < \tau_\VEC{M}<\infty)}{\P_{\VEC{y}_0}( k_\ell + n < \tau_\VEC{M}<\infty )}\\
		&= \P_{\VEC{y}_0}( \VEC{M}_{k_1}= \VEC{y}_1, \dots, \VEC{M}_{k_\ell}= \VEC{y}_\ell )
		\frac{\sum_{\VEC{z}\neq \VEC{0}}P^n_{(\VEC{y}_\ell,\VEC{z})}\VEC{q}^{\VEC{z}}}{\sum_{\VEC{z} \neq \VEC{0}}P^{k_\ell + n}_{(\VEC{y}_0,\VEC{z})}\VEC{q}^{\VEC{z}}}.
	\end{split}
\end{equation*}
Using~\eqref{eq:extinctionrepresentation} twice, we write the last ratio as
\begin{equation}\label{eq: fact}
	\begin{split}
		\frac{\sum_{\VEC{z} \neq \VEC{0}}P^n_{(\VEC{y}_\ell,\VEC{z})}\VEC{q}^{\VEC{z}}}{\sum_{\VEC{z} \neq \VEC{0}}P^{k_j + n}_{(\VEC{y}_0,\VEC{z})}\VEC{q}^{\VEC{z}}}
		&= \frac{\VEC{q}^{\VEC{y}_\ell} - \VEC{f}_{n}^{\VEC{y}_\ell}(\VEC{0}) }{\VEC{q}^{\VEC{y}_0} - \VEC{f}_{k_\ell+ n}^{\VEC{y}_0}(\VEC{0}) } \\
		&= \frac{\VEC{q}^{\VEC{y}_0} - \VEC{f}_{n}^{\VEC{y}_0}(\VEC{0}) }{\VEC{q}^{\VEC{y}_0} - \VEC{f}_{k_\ell+ n}^{\VEC{y}_0}(\VEC{0}) }\cdot
		\frac{\VEC{q}^{\VEC{y}_\ell} \bigl(1 - \overline{\VEC{f}}_{n}^{\VEC{y}_\ell}(\VEC{0}) \bigr)}{\VEC{u}\cdot(\VEC{1}- \overline{\VEC{f}}_n(\VEC{0}))}
		\big[\frac{\VEC{q}^{\VEC{y}_0} \bigl(1 - \overline{\VEC{f}}_{n}^{\VEC{y}_0}(\VEC{0})\bigr) }{\VEC{u}\cdot(\VEC{1}- \overline{\VEC{f}}_n(\VEC{0}))}\big]^{-1}.
	\end{split}
\end{equation}

By~\cite[Lemma~5]{nakagawa1978} applied to $\overline{\VEC{f}}$, together with the identity
$\overline{\rho}=h'(q)$ from~\eqref{eq:barrho}, we have
\[
	\lim_{n\rightarrow \infty}\frac{\VEC{q}^{\VEC{y}_0} - \VEC{f}_{n}^{\VEC{y}_0}(\VEC{0}) }{\VEC{q}^{\VEC{y}_0} - \VEC{f}_{n+1}^{\VEC{y}_0}(\VEC{0}) }
	= \lim_{n\rightarrow \infty}\frac{1 - \overline{\VEC{f}}_{n}^{\VEC{y}_0}(\VEC{0}) }{1 - \overline{\VEC{f}}_{n+1}^{\VEC{y}_0}(\VEC{0}) }
	= \frac{1}{\overline{\rho}}=\frac{1}{h'(q)}, \qquad \forall\, \VEC{y}_0 \in S^\ast,
\]
and a recursion shows that
\begin{equation}\label{eq: reclem5}
	\lim_{n\rightarrow \infty} \frac{\VEC{q}^{\VEC{y}_0} - \VEC{f}_{n}^{\VEC{y}_0}(\VEC{0}) }{\VEC{q}^{\VEC{y}_0} - \VEC{f}_{k_\ell+ n}^{\VEC{y}_0}(\VEC{0}) }= \frac{1}{h'(q)^{k_\ell}}.
\end{equation}
By~\cite[Lemma~3]{nakagawa1978}, applied again to $\overline{\VEC{f}}$, and since the right
eigenvector associated with $\overline\rho$ is $\VEC{1}$,
\begin{equation}\label{eq: lemma3}
	\lim_{n\rightarrow \infty}
	\frac{\overline{\VEC{f}}_{n}^{\VEC{z}}(\VEC{1}) - \overline{\VEC{f}}_{n}^{\VEC{z}}(\VEC{0}) }{\VEC{u} \cdot(\overline{\VEC{f}}_n(\VEC{1})- \overline{\VEC{f}}_n(\VEC{0}))}
	=\lim_{n\rightarrow \infty}\frac{1 - \overline{\VEC{f}}_{n}^{\VEC{z}}(\VEC{0}) }{\VEC{u}\cdot(\VEC{1}-\overline{\VEC{f}}_n(\VEC{0}))}
	=\VEC{z}\cdot \VEC{1}=|\VEC{z}|, \qquad \forall\, \VEC{z} \in S^\ast.
\end{equation}
Applying~\eqref{eq: reclem5} and~\eqref{eq: lemma3} in~\eqref{eq: fact} gives
\begin{equation}\label{eq: limratnstepgmM}
	\lim_{n\rightarrow\infty} \frac{\sum_{\VEC{z} \neq \VEC{0}}P^n_{(\VEC{y}_\ell,\VEC{z})}\VEC{q}^{\VEC{z}}}{\sum_{\VEC{z} \neq \VEC{0}}P^{k_\ell + n}_{(\VEC{y}_0,\VEC{z})}\VEC{q}^{\VEC{z}}}
	= \frac{1}{h'(q)^{k_\ell}} \frac{\VEC{q}^{\VEC{y}_\ell}\,\VEC{y}_\ell \cdot \VEC{1}}{\VEC{q}^{\VEC{y}_0}\,\VEC{y}_0 \cdot \VEC{1}}
	=\frac{1}{h'(q)^{k_\ell}} \frac{q^{|\VEC{y}_\ell|}|\VEC{y}_\ell |}{q^{|\VEC{y}_0|}|\VEC{y}_0|}.
\end{equation}
Substituting back into the initial conditioning identity yields~\eqref{eq:Qlim}. The
Markov property and the expression~\eqref{eq: Qtransitions} for the transition
probabilities follow by taking ratios of consecutive finite-dimensional distributions,
exactly as in the proof of Theorem~\ref{thm:condnonext_clonemut}.

Finally, under hypothesis~\eqref{hyp:timextsubc} we have $q=1$, so
$\overline{\VEC{f}}=\VEC{f}$, $\overline\rho=\rho=h'(1)$ and the conditioning event reduces to
$\{\tau_\VEC{M}>k_j+n\}$; \eqref{eq:Qlim} then specialises to~\eqref{eq:Qlimsub} by
Proposition~\ref{prop: meanmxrho}.
\qed


\section*{Acknowledgements}
 MCF’s research is supported by the UNAM-PAPIIT grant IN109924.
SHT acknowledges the support of the UNAM-PAPIIT grant IA106126.

\bibliographystyle{plain}

\bibliography{biblio}
\end{document}